\date{\today}
\newtheorem{teo}{Theorem}[section]
\newtheorem{coro}[teo]{Corollary}
\newtheorem{lema}[teo]{Lemma}
\newtheorem{rem}[teo]{Remark}
\newtheorem{prop}[teo]{Proposition}
\newtheorem{defi}[teo]{Definition}
\newtheorem{ex}[teo]{Example}
\DeclareMathOperator{\Sh}{\mathcal{S}}
\DeclareMathOperator{\N}{\mathbb{N}}
\newcommand{\cantor}{2^{\N}}
\newcommand{\dom}{\text{\sf dom}}
\newcommand{\im}{\text{\sf im}}
\newcommand{\su}{\subseteq}
\newcommand{\finitosg}{{\mathcal{P}_{\rm fin}(G)}}
\title{A topological correspondence between   partial actions of groups and  inverse semigroup actions}
\author{
	Luis Mart\'inez, H\'{e}ctor Pinedo and   Carlos Uzc\'ategui\\
	\small  Escuela de Matem\'{a}ticas\\
	\small Universidad Industrial de Santander\\
	\small Cra. 27 calle 9, Bucaramanga, Colombia\\
	\small  e-mail: luchomartinez9816@hotmail.com, hpinedot@uis.edu.co, cuzcatea@saber.uis.edu.co}
\begin{document}

\maketitle
\begin{abstract} 
We present some generalizations of the well known correspondence, found by  R. Exel, between partial actions of a group $G$ on  a set $X$ and  semigroup homomorphism of  $\Sh(G)$ on the semigroup $I(X)$ of partial bijections of $X,$ being $\Sh(G)$  an inverse  monoid introduced by Exel. We show that any unital premorphism  $\theta:G\to S$, where $S$ is an inverse monoid, can be extended to a semigroup homomorphism $\theta^*:T\to S$  for any    inverse semigroup  $T$ with $\Sh(G)\su T\su  P^*(G)\times G,$ being $P^*(G)$ the semigroup of non-empty subset of $G$, and such that  $E(S)$ satisfies some lattice theoretical condition. 
We also consider a topological version of this result. We present a minimal Hausdorff inverse semigroup topology on $\Gamma(X)$, the inverse semigroup of partial homeomorphism between open subsets of a locally compact Hausdorff space $X$. 
\end{abstract}

\noindent{
	\textbf{2020 AMS Subject Classification:} Primary  54H15. Secondary  57S99, 20M18.\\
	\noindent
	\textbf{Key Words:} Topological partial action,  partial homeomorphism, Birget-Rhodes expansion, compact open topology, Fell topology, small-semilattices.}

\date

\section{Introduction}
 A   partial action of a group $G$ on a set $X$ is a unital premorphism $\theta:G\to I(X),$ where $I(X)$ is the inverse semigroup of all partial bijections between subsets of $X$ (more details are given in the next section). Partial actions of groups were introduced by R. Exel in \cite{E0} in the context of $C^*$-algebras and have widespread  in several branches of mathematics.
Exel \cite{E1} constructed  a monoid $\Sh(G)$ which plays a crucial role in the theory of partial actions and partial representations of groups  and $C^*$-algebras. He  found  a  one-to-one
 correspondence between partial actions of $G$ on  $X$ and  semigroup homomorphism of  $\Sh(G)$ on $I(X)$ (see  section \ref{BR}).    

A  reformulation of Exel's correspondence for inverse monoids says that there is a one-to-one correspondence between unital premorphisms  $\theta:G\to S$, where $S$ is an inverse monoid, and semigroup homomorphisms of $\theta^*:\Sh(G)\to S$.  
Kellendonk and Lawson  showed that $\Sh(G)$ is isomorphic to the Birget-Rhodes expansion  $\widetilde{G}^R$ of $G$ (see \cite{KL}). The semigroup  $\widetilde{G}^R$ can be seen as a subsemigroup of a semidirect product  $\mathcal{P}^*(G)\rtimes_\beta G$. 
We present  a generalization of  Exel's correspondence showing that such unital premorphisms $\theta$ can be extended to a semigroup homomorphism $\rho^*:T\to S$ where $T$ is an inverse semigroup with $\widetilde{G}^R\subseteq T\subseteq \mathcal{P}^*(G)\rtimes_\beta G$  (see Theorem \ref{Text}). To obtain such extension some lattice theoretical conditions are imposed on $E(S)$. Other algebraic extensions of  Exel's correspondence  have being obtained by replacing, for instance,   the group $G$ by an inductive groupoid \cite{BFP}, an inverse semigroup \cite{BE},  a   weakly left $E$-ample semigroup  \cite{GH1} or  an inductive constellation  \cite{GH}.  The main idea in those works  was to find an appropriate version of the Birget-Rhodes expansion to get such extensions. 

We also aim  to explore  a topological version of  Exel's correspondence.  Suppose $S$ is topological inverse semigroup, $G$ a topological group and $\theta:G\to S$ a continuous unital premorphism. We show that the extension $\theta^*: \widetilde{G}^R\to S$ is also continuous assuming, as above, some lattice theoretical requirements on $E(S)$, namely, it must have a basis consisting of subsemilattices (see Theorem \ref{ExelCorres}).

A particular important inverse semigroup is $\Gamma(X)$, the collection of all homeomorphisms between open subsets of a topological space $X$. A unital premorphism $\theta:G\to \Gamma(X)$ is called a topological partial action  (see \cite{abadie,KL}).  In order to treat topologically the problem of extending $\theta$, we first show that an appropriated version of the classical compact-open topology on $\Gamma(X)$ is suitable for our purposes.  The natural environment  is to work with a locally compact Hausdorff space $X,$ which is also the  realm where  partial actions of groups on $C^*$-algebras  are studied \cite{E2}. There are some works on topologies on spaces of  partial functions with closed domain   (see \cite{DiConcilioetal2000} and references therein); but, apparently there are less works  when  the domain is open,  the earliest we have found is the work of Abd-Allah and Brown
 \cite{AbdBrown} who showed  that the compact-open topology makes $\Gamma(X)$ a topological inverse semigroup (this approach  has been used also in differential geometry \cite{AM}). However, the compact-open topology on $\Gamma(X)$  is not $T_1$. We define  a Hausdorff extension of it which turns out  to be  minimal in a natural class  of inverse semigroup topologies on $\Gamma(X)$. Moreover, it also has  the property that the collection of idempotents $E(\Gamma(X))$ is compact (see  Theorem \ref{hco-minimal} and Proposition \ref{hco-compact}).   For completing  the topological setting,  we need to endow $\widetilde{G}^R$ with a topology. This problem  was already explored by Choi \cite{Choi2013}  who considered  an inverse semigroup $\widetilde{G}_c^R\supseteq \widetilde{G}^R$  endowed with the subspace topology as a subset of $K(G)\times G$, where $K(G)$ is the hyperspace of compact subsets of $G$ with  the  Vietoris topology.  After all this preparation, we  finally show that there is  semigroup homomorphim $\theta^*:\widetilde{G}_c^R\to \Gamma(X)$ which is a continuous extension of $\theta$ (see Theorem \ref{ChoisExtension}).
  
The paper is organized as follows. Section 2 contains some preliminary facts and notations. In section 3 we define a Hausdorff extension of the compact-open topology on $\Gamma(X)$,  we show it makes it a topological inverse semigroup and works well with  partial actions of topological groups.     We prove  that the collection of idempotents $E(\Gamma(X))$ is homeomorphic to the space of closed subsets of $X$ with the Fell topology. We also show an extension result  that will be needed later for studying the Birget-Rhodes expansion of a topological group (see Proposition \ref{ext-to-hyp}). Finally, in section 4 we show the main results.   In order to obtain a  purely algebraic result about a generalization of Exel's correspondence (see Theorem \ref{Text}), we need to  introduce in  Definition \ref{meetc} a lattice-theoretical notion  on inverse semigroups which allows us to obtain in  Lemma \ref{lemainf}  a semigroup homomorphism which plays a crucial role in our generalization, after that we show a topological version of this correspondence (Theorem \ref{ExelCorres}).  We also extend this result  showing that any continuous unital premorphism from $G$ to a closed subsemigroup of $\Gamma(X)$ can be extended to a  continuous morphism from  $\widetilde{G}_c^R$ (Theorem \ref{ChoisExtension}). We observe in Example \ref{fex}  that, in general,  this extension is not unique.

\section{Preliminaries}
A semigroup $S$ is called {\it regular,}  if for any $s \in S$ there is $s^{-1} \in S$  such that $ss^{-1}s = s$ and $s^{-1}ss^{-1} = s^{-1}.$ A regular semigroup $S$ is said to be {\it inverse,} if each $s \in S$ has a unique $s^{-1}.$ Inverse semigroups are precisely those regular semigroups whose idempotents commute \cite[Theorem 3]{Law}. Each inverse semigroup admits the {\it natural partial order} defined by $s \leq t \Leftrightarrow s = et $ for some idempotent $e\in S.$  The set of idempotents of $S,$ for which we use the standard notation $E(S),$ is a meet semilattice under $\leq$,  where the meet $e \wedge f=ef$ for  $e, f \in E(S).$ The general reference for inverse semigroups is \cite{Law}.

Let $X$ be a set, denote by $I(X)$ the inverse monoid of partial bijections of $X$ (that is bijections between subsets of $X$) the multiplication rule on $I(X)$ is given
by composition of partial maps in the largest domain where it makes sense, that is if $f,g\in I(X)$ then $\dom(fg)=g^{-1} (\im g\cap \dom f)$ and  $\im(fg)=f(\im g\cap \dom f)$. An important classical result on inverse semigroup theory is  the Vagner-Preston representation Theorem which says that every inverse semigroup is  isomorphic to a subsemigroup of $I(X)$  (see, for instance,  \cite[Theorem 1, pag. 36]{Law}).   

According to  \cite{KL},  a function  $\theta: T\to S$ between inverse semigroups is a  {\it premorphism}  if for all $s,t\in T,$ $\theta(t^{-1})=\theta(t)^{-1}$ and $\theta(s)\theta(t)\leq  \theta(st).$
If  $S$ and $T$ are both monoids, then a  premorphism $\theta$ is called {\it unital} if $\theta(1)=1.$ When $T=G$ is a group,  it follows by    \cite[Proposition 2.1]{KL}  that  $\theta:G\to S$  is a unital premorphism, if and only if,
\begin{itemize} 
\item $\theta(1)=1.$
\item $\theta(g^{-1})\theta(g)\theta(h)=\theta(g^{-1})\theta(gh),$ for any $g,h\in G.$
\end{itemize}
It is not difficult to see that $\theta$ also satisfies   $\theta(g)\theta(h)\theta(h^{-1})=\theta(gh)\theta(h^{-1}) $ for $g,h\in G.$ A  {\em (set theoretic) partial action} of $G$ on $X$ is a unital premorphism $\theta:G\to I(X).$  Equivalently,  a partial action of  $G$ on  $X$ is  a family of partial bijections  $\alpha =\{\alpha_g : X_{g^{-1}} \to  X_g : g \in  G\}$ such that the corresponding map $\alpha : G\ni g \mapsto \alpha_g\in I(X)$ is a unital premorphism. For a detailed account about recent developments on partial actions the interested reader may consult  \cite{DO, E2}.

Let $X$ be a locally compact Hausdorff  space. $CL(X)$ denotes the collection of all closed subsets of $X$ (including the emptyset).   Consider the following subsets of $CL(X)$ for $V\su X$ open:
\[
V^+=\{A\in CL(X):\; A\su V\}
\]
and
\[
V^-=\{A\in CL(X):\; A\cap V\neq\emptyset \}.
\]
The {\em Fell} topology on $CL(X)$ has as a subbasis  the sets $V^-$ for $V$ open and $W^+$ for $W$ open with compact complement.
It is known that $CL(X)$ with the Fell topology is a compact Hausdorff space (see, for instance, \cite[Theorem 5.3.1]{Beer1993}).
The {\em Vietoris} topology on $CL(X)$ is generated by $V^+$ and $V^-$  with $V\su X$ open. The hyperspace  $K(X)$  is the collection of all compact subsets of $X$.  The Vietoris topology on $K(X)$  has the  following sets  as a basis:
 \[
 N(U_1,\cdots,U_n)=\{K\in K(X):\; K\su U_1\cup\cdots\cup U_n, \,K\cap U_i\neq \emptyset\; \text{for all $i\leq n$}\}
 \]
where each $U_i$  is open in $X$. Notice that  $K(X)$ is a topological semigroup under the map $(A,B)\mapsto A\cup B$.

\section{The semigroup of continuous maps with open domain}\label{cop}

The purpose of this section is  to present some natural topologies  that  will be needed to study continuous actions of topological inverse semigroups.  %The classical Wagner-Preston theorem says that any inverse semigroup is isomorphic to a  subsemigroup of $I(X)$.  
When $X$ is a topological space, a particularly important example is the semigroup of continuous partial homeomorphism with open domain. The main objective of this section is to show that this type of semigroups are topological under an appropriate version of the compact-open topology.  We start by fixing some notation. 

Let $X$ and $Y$ be Hausdorff spaces.  The space of partial continuous functions is the following:
\[
 C_{od}(X,Y)=\{f:D\to Y: \; \text{$D\su X$ is open and $f$ is continuous}\}.
 \]
When $X=Y$ we write $C_{od}(X)$ instead of  $C_{od}(X,X)$. 
Notice that $C_{od}(X)$ is a semigroup under the usual composition of partial functions.

 \subsection{The compact open topology}
 
 Now we introduce the compact-open topology on $C_{od}(X,Y)$, which is the most natural topology on spaces of continuous functions. As we said in the introduction, the oldest reference we have found about this topology on spaces of partial functions is \cite{AbdBrown}. 
For $K\su X$ and $V\su Y$, let 
 \[
 \langle K;V\rangle=\{f\in C_{od}(X,Y):\; K\su \dom(f)\; \& \; f(K)\su V\}.
 \]
 Let $\tau_{co}$ be the topology generated by  the sets $\langle K;V\rangle$, with $K$ compact  and $V$ open, as a subbasis.  
  
 \begin{teo}
 \label{prop-basicas}
 Let $X$ be a locally compact  Hausdorff space and $Y$ be a Hausdorff space. Then 
 \begin{itemize}
\item[(i)] The topology $\tau_{co}$ is $T_0$ and not necessarily $T_1$.
 
 \item[(ii)] Let 
 \[
C_{od}(X,Y)\ast X=\{(f,x)\in C_{od}(X,Y)\times X:\; x\in \dom(f)\}
 \]
and  $ev: C_{od}(X,Y)\ast X\to Y$ be the evaluation map given by $ev(f,x)=f(x)$. Then  $C_{od}(X,Y)\ast X$ is an  open subset of $C_{od}(X,Y)\times X$ and $ev$ is continuous with respect to the compact-open topology.

\item[(iii)] Let $X,Y,Z$ be Hausdorff spaces with $X$ and $Y$ locally compact. Then $c:C_{od}(X,Y)\times C_{od}(Y,Z)\to C_{od}(X,Z)$ given by $c(f,g)=g\circ f$ is continuous with respect to  the compact-open topology.  Thus $C_{od}(X)$ is a topological semigroup with the compact-open topo\-logy.

 \end{itemize}
 
 \end{teo}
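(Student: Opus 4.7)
My plan is to prove the three parts by adapting the classical compact-open arguments, taking extra care because elements of $C_{od}(X,Y)$ have varying open domains.

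For $(i)$, to see that $\tau_{co}$ is $T_0$, I take distinct $f,g\in C_{od}(X,Y)$. If some $x\in \dom(f)\setminus \dom(g)$, then $\langle\{x\};Y\rangle$ contains $f$ but not $g$ (singletons are compact in a Hausdorff space). If instead $\dom(f)=\dom(g)$ and $f(x)\neq g(x)$ for some $x$, Hausdorffness of $Y$ supplies disjoint opens $V\ni f(x)$ and $W\ni g(x)$, and $\langle\{x\};V\rangle$ separates $f$ from $g$. To see $T_1$ may fail, I take a proper open subset $U\subsetneq X$, a continuous $f$ defined on all of $X$ and $g=f|_U$: any basic open $\langle K;V\rangle$ containing $g$ has $K\subseteq U\subseteq \dom(f)$ and $f(K)=g(K)\subseteq V$, hence contains $f$ as well.

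For $(ii)$, if $(f,x)\in C_{od}(X,Y)\ast X$, local compactness of $X$ yields a compact neighborhood $K$ of $x$ with $K\subseteq \dom(f)$; then $\langle K;Y\rangle\times \mathrm{int}(K)$ is an open neighborhood of $(f,x)$ contained in $C_{od}(X,Y)\ast X$, so the latter is open. For continuity of $ev$ at $(f,x)$, given an open $V\ni f(x)$, the set $f^{-1}(V)$ is open in $\dom(f)$ and hence in $X$; pick a compact neighborhood $K$ of $x$ with $K\subseteq f^{-1}(V)$, so $\langle K;V\rangle\times \mathrm{int}(K)$ is an open neighborhood of $(f,x)$ mapped into $V$ by $ev$.

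Part $(iii)$ is the main step. Suppose $g\circ f\in\langle K;W\rangle$, so $K\subseteq \dom(g\circ f)=f^{-1}(\dom(g))$ and $g(f(K))\subseteq W$. Set $L=f(K)$, a compact subset of the open set $\dom(g)\cap g^{-1}(W)\subseteq Y$. Here I use local compactness of $Y$ to find a compact set $L'$ with $L\subseteq \mathrm{int}(L')\subseteq L'\subseteq \dom(g)\cap g^{-1}(W)$. Then $(f,g)\in\langle K;\mathrm{int}(L')\rangle\times\langle L';W\rangle$, and for any $(f',g')$ in this product one has $K\subseteq \dom(f')$, $f'(K)\subseteq \mathrm{int}(L')\subseteq L'\subseteq \dom(g')$, and $g'(f'(K))\subseteq g'(L')\subseteq W$, so $g'\circ f'\in\langle K;W\rangle$. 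Specialising to $X=Y=Z$ shows $C_{od}(X)$ is a topological semigroup. The main obstacle throughout is the bookkeeping forced by variable domains: nearby functions need their domains to contain the test compact sets, and this is precisely what local compactness of $X$ and $Y$ buys by letting one enlarge any compact set inside an open set to a compact neighborhood still inside that open set.
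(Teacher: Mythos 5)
Your proof is correct. For part (i) you argue exactly as the paper does: separate functions with different domains by $\langle\{x\};Y\rangle$, separate functions with the same domain by a test at a point where they differ, and defeat $T_1$ by observing that any basic open set containing a restriction $g\subsetneq f$ automatically contains $f$ (the paper states this abstractly for any $g\subseteq f$; your concrete $g=f|_U$ is an instance of the same observation). The real difference is in parts (ii) and (iii): the paper does not prove them at all, but simply cites Propositions 7 and 9 of Abd-Allah and Brown, whereas you supply self-contained arguments. Your arguments are the standard compact-open ones and they are sound: for (ii), local compactness of $X$ gives a compact neighborhood $K$ of $x$ inside $\dom(f)$ (resp.\ inside $f^{-1}(V)$), and $\langle K;Y\rangle\times\mathrm{int}(K)$ (resp.\ $\langle K;V\rangle\times\mathrm{int}(K)$) does the job; for (iii), the key step of fattening the compact set $f(K)$ to a compact neighborhood $L'$ with $f(K)\subseteq\mathrm{int}(L')\subseteq L'\subseteq\dom(g)\cap g^{-1}(W)$ is exactly where local compactness of $Y$ is needed, and the factorization $\langle K;\mathrm{int}(L')\rangle\times\langle L';W\rangle\subseteq c^{-1}(\langle K;W\rangle)$ is the classical trick adapted to partial maps. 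So your write-up is, if anything, more complete than the paper's, at the cost of reproducing material the authors chose to outsource to the literature.
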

 
\proof

$(i)$ To see that the compact-open topology is $T_0$, let $f,g\in C_{od}(X,Y)$ with $f\neq g$. There are some cases to be considered. Suppose $ \dom(f)= \dom(g)$. Then there is  $x\in \dom(f)$ such that  $f(x)\neq g(x)$. Thus $g\in \langle \{x\};Y\setminus\{f(x)\}\rangle$ and 
$f\nin  \langle \{x\};Y\setminus\{f(x)\}\rangle$. Suppose there is $x\in \dom(f)\setminus \dom(g)$. Then  $f\in\langle \{x\}; Y\rangle$ and $g\nin \langle \{x\};Y\rangle$. This topology is not $T_1$, in fact, suppose  $g\su f$, $g\neq f$ and $g\in \langle K; V\rangle$.  Then $f\in \langle K;V\rangle$. 
 
 $(ii)$ and $(iii)$ are   \cite[Proposition 7]{AbdBrown} and   \cite[Proposition 9]{AbdBrown}, respectively.  \endproof%To see that $C_{od}(X,Y)\ast X$ is open, let $(f, x)\in C_{od}(X,Y)\ast X$. Let $V$ open such that $x\in V\su \overline{V}\su \dom(f)$ and $\overline{V}$ is compact.  Then $(f,x)\in \langle\overline{V};Y\rangle\times V\su C_{od}(X,Y)\ast X$. 
 
% 
%To see that $ev$ is continuous,  let $V\su Y$ open and fix $(f_0,x_0)\in C_{od}(X,Y)\ast X$ such that  $f_0(x_0)\in V$. Let $W\su \dom(f_0)$ be open such that 
%$x_0\in W\su\overline{W}\su f_0^{-1}(V)$ and $\overline{W}$ compact. Then 
%\[
%(g,x)\in \langle\overline{W}; V\rangle\times  W\Rightarrow ev(g,x)=g(x)\in V.
%\]
%
% $(iii)$ Let $K\subseteq X$ be compact and $V\subseteq Z$ be open. Let $(f,g)$ be such that $g\circ f\in \langle K;V\rangle$, i.e. $g(f(K))\subseteq V$. Thus $f(K)\su g^{-1}(V)$. Pick $W$ open such that $f(K)\su W\su\overline{W}\su g^{-1}(V)$. Hence $(f,g)\in \langle K;W\rangle \times \langle \overline{W}; V\rangle $ and this open set works.

\begin{teo}
\label{minimality2} Let $X$ be a locally compact  Hausdorff space, $M$ be  a topological space and $h : M \to C_{od}(X)$ be  a map.   Let 
\[
M\ast X=\{(m,x)\in M\times X:\; x\in \dom(h(m))\}=\{(m,x)\in M\times X:\; (h(m), x)\in C_{od}(X)\ast X\}.
\]
The following are equivalent:

\begin{itemize}
\item[(i)]  $h$ is continuous with respect to the compact-open topology.

\item[(ii)] $M\ast X$ is open in $M\times X$ and the map $a:M\ast X\to X$ given by $a(m, x)=h(m)(x)$ is continuous.
\end{itemize}
In particular, the compact-open topology is the coarsest topology that turns $C_{od}(X)$ into a topological semigroup in such a way that the evaluation map $ev:C_{od}(X) \ast X\to X$ is continuous and $C_{od}(X) \ast X$ is open in $C_{od}(X) \times X$.

\end{teo}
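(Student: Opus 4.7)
The plan is to prove the equivalence (i) $\Leftrightarrow$ (ii) directly, with both implications relying on the local compactness of $X$ (specifically, the existence of compact neighborhoods of each point), and then to deduce the minimality statement as an immediate corollary by applying the equivalence to $M=C_{od}(X)$ with $h=\mathrm{id}$.

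For (i) $\Rightarrow$ (ii), fix $(m_0,x_0)\in M\ast X$. Local compactness provides a compact neighborhood $K$ of $x_0$ with $K\su \dom(h(m_0))$, so $h(m_0)\in\langle K;X\rangle$; continuity of $h$ then yields an open $U\ni m_0$ with $h(U)\su\langle K;X\rangle$, and $U\times\mathrm{int}(K)$ is an open neighborhood of $(m_0,x_0)$ contained in $M\ast X$. For continuity of $a$ at $(m_0,x_0)$ into an open $V\ni h(m_0)(x_0)$, continuity of the partial map $h(m_0)$ at $x_0$ together with local compactness furnish a compact neighborhood $K'$ of $x_0$ with $K'\su\dom(h(m_0))$ and $h(m_0)(K')\su V$; then $h^{-1}(\langle K';V\rangle)\times\mathrm{int}(K')$ is an open neighborhood of $(m_0,x_0)$ whose image under $a$ lies in $V$.

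For (ii) $\Rightarrow$ (i), it suffices to check that preimages of subbasic sets $\langle K;V\rangle$ are open. Fix $m_0\in h^{-1}(\langle K;V\rangle)$, so $K\su\dom(h(m_0))$ and $h(m_0)(K)\su V$. For each $x\in K$, continuity of $a$ and openness of $M\ast X$ provide open sets $U_x\ni m_0$ and $W_x\ni x$ with $U_x\times W_x\su M\ast X$ and $a(U_x\times W_x)\su V$. Since $\{W_x:x\in K\}$ covers the compact set $K$, extract a finite subcover $W_{x_1},\dots,W_{x_n}$ and set $U=\bigcap_{i=1}^n U_{x_i}$. Then any $m\in U$ satisfies $K\su\dom(h(m))$ and $h(m)(K)\su V$, so $h(U)\su\langle K;V\rangle$.

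The minimality statement follows at once: any topology $\tau$ on $C_{od}(X)$ turning it into a topological semigroup with $ev$ continuous and $C_{od}(X)\ast X$ open satisfies hypothesis (ii) for $h=\mathrm{id}$, hence (i) gives continuity of $\mathrm{id}:(C_{od}(X),\tau)\to(C_{od}(X),\tau_{co})$, i.e.\ $\tau_{co}\su\tau$; Theorem \ref{prop-basicas} already shows that $\tau_{co}$ itself satisfies these conditions. No step is a genuine obstacle; the mild subtleties are the repeated invocation of compact neighborhoods of a point and, in (ii) $\Rightarrow$ (i), the need to shrink the product neighborhoods into $M\ast X$, both handled by local compactness and the openness of $M\ast X$, respectively.
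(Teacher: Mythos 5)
Your proof is correct and follows essentially the same route as the paper: the (ii) $\Rightarrow$ (i) direction is the same finite-subcover/tube argument over the compact set $K$, and the minimality claim is deduced identically by applying the equivalence to $M=(C_{od}(X),\tau)$ with $h=\mathrm{id}$ and noting that $\tau_{co}$ itself satisfies the conditions. The only difference is in (i) $\Rightarrow$ (ii), where the paper simply writes $a=ev\circ(h\times\mathrm{id})$ and observes that $M\ast X$ is the preimage of the open set $C_{od}(X)\ast X$, invoking Theorem~\ref{prop-basicas}(ii) (the Abd-Allah--Brown facts that $ev$ is continuous and $C_{od}(X)\ast X$ is open), whereas you re-derive that content directly from local compactness; both arguments are sound.
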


\proof $(i) \Rightarrow (ii)$ Suppose $h$ is continuous with respect to the compact-open topology. Since $a(m,x)=ev(h(m),x)$,  by Proposition \ref{prop-basicas},  we get that $a$ is continuous.  As $C_{od}(X)\ast X$ is open and the function $M\times X\ni (m,x)\mapsto (h(m), x) \in C_{od}(X)\times X$ is continuous, then   $M\ast X$ is open. 

 $(ii) \Rightarrow (i)$ Suppose now that $a$ is continuous. Let $K\su X$ be a compact set and $V\su X$ be an open set. 
Let $O=h^{-1}(\langle K;V\rangle)$. Then 
\[
O=\{m\in M: \; h(m)(K)\su V\}=\{m\in M:\; K\su h(m)^{-1}(V)\}=\{m\in M:\; \{m\}\times K\su a^{-1}(V)\}.
\]
To see that  $O$ is open, let $m\in O$. Since  $M\ast X$ is open in $M\times X$,  $a^{-1}(V)$ is open  in $M\ast X$ and $K$ is compact, there are $W\su M$, $U\su X$ open sets such that $m\in W$, $K\su U$ and $W\times U\su a^{-1}(V)$.  Then $m\in W\su O$. 

For the last claim, let $\tau$ be a semigroup topology on $C_{od}(X)$ and $M=(C_{od}(X),\tau) $. Suppose  the evaluation map $a:M\ast  X\to X$  is continuous and $M\ast X$ is open in $M\times X$.   Let $h:M\to (C_{od}(X), \tau_{co})$ given by $h(f)=f$. Since  (ii) implies (i), $h$ is continuous, that is, $\tau_{co}\su \tau$.
\endproof

\subsection{The inverse semigroup of partial homeomorphisms}
  
We adopt the notation from \cite{AbdBrown}  and  set
$$
 \Gamma(X)=\{f|\;\text{$f:\dom(f)\to \im(f)$ is an homeomorphism with $\dom(f), \im(f)$  open subsets of $X$}\}
.$$
 By  an abuse of notation, we will regard $\Gamma(X)$ as a submonoid of $C_{od}(X)$. The elements of $ \Gamma(X)$ are the so-called {\it partial homeomorphisms }of $X.$
   
\begin{rem}{\rm 
The set $\Gamma(X)$ is an inverse semigroup with the usual composition of partial functions.   An inverse subsemigroup  $S$ of $\Gamma(X)$ for which $(f\restriction V)\in S,$ for each $f\in S$ and $V\subseteq \dom(f)$ open in $X$ is called a {\it pseudogroup of local transformations of X}. Pseudogroups are used in many areas of topology and differential geometry (see for instance \cite{AbdBrown, AM}  and the references therein).}
 
 \end{rem}

It is well known that the group $\mathcal{H}(X)$ of homeomorphisms of a compact Hausdorff space $X$ with the compact-open topology is  a topological group.  However, this is no longer the case when $X$ is not compact, the problem is that the inversion map $f\to f^{-1}$ might be discontinuous (some examples are presented in \cite{dijkstra,dijkstra2010}). Arens \cite{arens46} showed that if $X$ is locally compact and locally connected, then $\mathcal{H}(X)$ is a topological group with the compact-open topology (see also \cite{dijkstra}). We see below that a similar problem occurs with $\Gamma(X)$ but even when $X$ is compact. 

From this point on,  to avoid writing $\langle K; V\rangle \cap \Gamma(X)$, we assume that all   $\tau_{co}$ open sets are relativized to $\Gamma(X)$. 

\begin{ex}
{\em  We show that $\Gamma(\cantor)$ is not a topological inverse semigroup with the compact-open topology $\tau_{co}$. Fix an element  $x\in \cantor$ in  the Cantor space. Let $X=\cantor\setminus \{x\}$. Dijkstra \cite{dijkstra} (see also \cite{dijkstra2010}) have shown that the inversion map $f\to f^{-1}$ in $\mathcal{H}(X)$ is not continuous with respect to the compact-open topology. Since $X$ is open in $\cantor$,  $\mathcal{H}(X)\su \Gamma(\cantor)$ and thus the inversion map is not continuous as a map on $\Gamma(\cantor)$. 
}
\end{ex}

We need to enlarge the compact-open topology to make continuous the inversion map. 
The easiest way is as follows. Let $\tau_{ico}$ be the topology on $\Gamma(X)$ generated by the following collection of sets, where $K$ is compact and $V$ is open:
$
\langle K;V\rangle$ and 
$
(\langle K;V\rangle)^{-1}.
$
Notice that
\[
(\langle K;V\rangle)^{-1}=
\{f\in \Gamma(X):\; f^{-1}\in\langle K;V\rangle\}=\{f\in \Gamma(X):\; K\su f(V)\}.
\]

\begin{teo}
\label{topico} Let $X$ be a locally compact Hausdorff space. Then $(\Gamma(X), \tau_{ico})$ is a topological inverse semigroup. 
\qed
\end{teo}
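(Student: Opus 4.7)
The plan is to verify the two defining operations separately: continuity of inversion, and joint continuity of composition. Inversion is almost a tautology given the way $\tau_{ico}$ is cooked up, while composition reduces to the $\tau_{co}$-case of Theorem \ref{prop-basicas}(iii) via a symmetry trick.

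First I would handle the inversion map $\iota:\Gamma(X)\to\Gamma(X)$, $\iota(f)=f^{-1}$. Since $\iota\circ\iota=\mathrm{id}$, we have $\iota^{-1}(\langle K;V\rangle)=(\langle K;V\rangle)^{-1}$ and $\iota^{-1}((\langle K;V\rangle)^{-1})=\langle K;V\rangle$. Both of these belong to the generating subbasis of $\tau_{ico}$, so $\iota$ is continuous, and being an involution it is a homeomorphism of $(\Gamma(X),\tau_{ico})$.

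Next I would prove joint continuity of composition $m:\Gamma(X)\times\Gamma(X)\to\Gamma(X)$, $m(f,g)=g\circ f$. It suffices to show that $m^{-1}(U)$ is open in $(\Gamma(X),\tau_{ico})^2$ for each subbasic $U$. There are two cases. If $U=\langle K;V\rangle$, then by Theorem \ref{prop-basicas}(iii) the map $m$ is continuous as a map $(\Gamma(X),\tau_{co})^2\to(\Gamma(X),\tau_{co})$, so $m^{-1}(U)$ is already open in $(\Gamma(X),\tau_{co})^2$; since $\tau_{co}\subseteq\tau_{ico}$, the product topology coarsening ensures it is also open in $(\Gamma(X),\tau_{ico})^2$. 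If $U=(\langle K;V\rangle)^{-1}$, then use the identity $(g\circ f)^{-1}=f^{-1}\circ g^{-1}$, i.e.\ $\iota\circ m=m\circ(\iota\times\iota)\circ\sigma$, where $\sigma:\Gamma(X)^2\to\Gamma(X)^2$ is the coordinate swap. Consequently
\[
m^{-1}\bigl((\langle K;V\rangle)^{-1}\bigr)=(\iota\circ m)^{-1}(\langle K;V\rangle)=\bigl(m\circ(\iota\times\iota)\circ\sigma\bigr)^{-1}(\langle K;V\rangle).
\]
The swap $\sigma$ is obviously continuous, $\iota\times\iota$ is continuous by the first paragraph, and the preimage of $\langle K;V\rangle$ under $m$ is open by the previous case. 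Composing these gives an open set, as required.

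The only subtle point is the second case, where one has to notice that inversion interacts with composition by reversing the order of the factors; once this symmetry is observed, the $\tau_{ico}$-continuity of composition is forced by the $\tau_{co}$-continuity already established in Theorem \ref{prop-basicas}. I expect no further obstacle since no additional assumption on $X$ beyond local compactness and Hausdorffness is required (both are needed only to invoke Theorem \ref{prop-basicas}).
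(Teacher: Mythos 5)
Your proof is correct, and its skeleton matches the paper's: reduce to the two kinds of subbasic sets, observe that continuity of inversion is built into the definition of $\tau_{ico}$, and invoke Theorem \ref{prop-basicas}(iii) for $m^{-1}(\langle K;V\rangle)$. Where you diverge is the second case. The paper disposes of $c^{-1}\bigl(\langle K;V\rangle^{-1}\bigr)$ by asserting that an ``argument analogous to that in the proof of Theorem \ref{prop-basicas}'' applies, i.e.\ it implicitly reruns the direct compact-open estimate for the inverted subbasic sets. You instead derive that case formally from the identity $\iota\circ m=m\circ(\iota\times\iota)\circ\sigma$, using only the already-established $\tau_{co}^2$-openness of $m^{-1}(\langle K;V\rangle)$ (which persists in $\tau_{ico}^2$ since $\tau_{co}\subseteq\tau_{ico}$) and the $\tau_{ico}$-continuity of $\iota$. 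This is a genuine improvement in rigor at no cost: it replaces the paper's unspecified ``analogous argument'' with a two-line reduction, and it makes transparent exactly why the enlargement from $\tau_{co}$ to $\tau_{ico}$ cannot break continuity of composition --- the new subbasic sets are precisely the $\iota$-preimages of the old ones, and $\iota$ is an anti-automorphism of the semigroup. The only hypotheses used are local compactness and Hausdorffness of $X$, needed solely to invoke Theorem \ref{prop-basicas}(iii), exactly as in the paper.
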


\proof Let $c:\Gamma(X)\times \Gamma(X)\to \Gamma(X)$ be  $c(f,g)=f\circ g$. An argument, analogous to that in the proof of Theorem \ref{prop-basicas}, shows that $c^{-1}(\langle K;V\rangle)$ and $c^{-1}(\langle K;V\rangle^{-1})$ are $\tau_{ico}$-open, for every $K$ compact and $V$ open. Finally, it is obvious  from the definition of $\tau_{ico}$, that the inversion map $f\mapsto f^{-1}$   is continuous with respect to the topology $\tau_{ico}$.
\endproof

Similar to what happens with $\mathcal{H}(X)$, under the hypothesis of local connectedness, $\tau_{co}$ is an inverse semigroup topology. We include just a sketch of the proof since the argument is entirely similar to that given in \cite{arens46, dijkstra}.

\begin{teo}
Let $X$ be a locally compact and locally connected Hausdorff space. Then $\tau_{co}$ is an inverse semigroup topology on $\Gamma(X)$.
\end{teo}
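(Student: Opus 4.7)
Since Theorem \ref{prop-basicas}(iii) already gives $\tau_{co}$-continuity of composition, the only thing left is to show the inversion map $\iota : f\mapsto f^{-1}$ is $\tau_{co}$-continuous. For compact $K\subseteq X$ and open $V\subseteq X$,
\[
\iota^{-1}(\langle K;V\rangle)\;=\;\{f\in\Gamma(X):K\subseteq f(V)\},
\]
so my plan reduces to showing this preimage is $\tau_{co}$-open at each of its points $f_0$.

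For each $k\in K$, set $x_k=f_0^{-1}(k)\in V\cap\dom(f_0)$. Using local compactness and local connectedness twice, I would pick a connected open neighborhood $W_k$ of $x_k$ with $\overline{W_k}$ compact and $\overline{W_k}\subseteq V\cap\dom(f_0)$, and then a connected open $U_k\ni k$ with $\overline{U_k}$ compact and $\overline{U_k}\subseteq f_0(W_k)$. After extracting a finite subcover $U_{k_1},\ldots,U_{k_n}$ of $K$, the candidate $\tau_{co}$-neighborhood of $f_0$ is
\[
\mathcal{N}\;=\;\bigcap_{i=1}^{n}\Bigl(\langle\overline{W_{k_i}};X\rangle\cap\langle\{x_{k_i}\};U_{k_i}\rangle\cap\langle\overline{W_{k_i}}\setminus W_{k_i};\,X\setminus\overline{U_{k_i}}\rangle\Bigr).
\]
Membership $f_0\in\mathcal{N}$ is routine: the inclusion $\overline{U_{k_i}}\subseteq f_0(W_{k_i})$ forces $f_0(\overline{W_{k_i}}\setminus W_{k_i})$ to be disjoint from $\overline{U_{k_i}}$, which supplies the third factor.

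The crux, and the step I expect to be the main obstacle, is the containment $\mathcal{N}\subseteq\iota^{-1}(\langle K;V\rangle)$. Given $f\in\mathcal{N}$ and a fixed $i$, the set $f(W_{k_i})$ is open in $X$ because the restriction of $f$ to its domain is a homeomorphism onto an open subset of $X$, while $f(\overline{W_{k_i}})$ is compact and hence closed in $X$. The third factor of $\mathcal{N}$ then yields
\[
U_{k_i}\cap f(W_{k_i})\;=\;U_{k_i}\cap f(\overline{W_{k_i}}),
\]
which is therefore simultaneously open and closed in $U_{k_i}$; the middle factor supplies the nonempty element $f(x_{k_i})$, and connectedness of $U_{k_i}$ forces $U_{k_i}\subseteq f(W_{k_i})\subseteq f(V)$. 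Taking the union over $i$ gives $K\subseteq f(V)$, as required. This clopen-in-connected step is precisely the Arens-style manoeuvre alluded to in the paper, and is the only place where local connectedness of $X$ is invoked.
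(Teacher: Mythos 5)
Your proof is correct and follows the same Arens--Dijkstra strategy that the paper sketches: reduce the problem to showing that $\langle K;V\rangle^{-1}=\{f\in\Gamma(X):K\subseteq f(V)\}$ is $\tau_{co}$-open, then run the clopen-in-a-connected-set argument, with local connectedness entering exactly where you say it does. Your write-up is in fact more complete than the paper's, which only displays a candidate open set $O$ (containing two symbols, $M$ and $L$, that are never introduced) and defers the verification to Dijkstra's article; your choice to center the connected sets $U_{k_i}$ at the points of $K$ on the codomain side and to push the annuli $\overline{W_{k_i}}\setminus W_{k_i}$ away from $\overline{U_{k_i}}$ makes both the membership check $f_0\in\mathcal{N}$ and the clopen step self-contained and airtight.
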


\proof It suffices to show that $\tau_{ico}\su\tau_{co}$, equivalently,  $\langle K; V\rangle^{-1}=\{f\in \Gamma(X):\; K\su f(V)\}$  belongs to $\tau_{co}$ for every $K$ compact and $V$ open in $X$. Let $f\in \Gamma(X)$ be such that $K\su f(V)$. Let $A\su f^{-1}(K)$ be a finite set  such that $f^{-1}(K)\su \bigcup_{x\in A} U_x$ where  $U_x$ is an open connected set such that  $x\in U_x$ and $C_x=\overline{U_x}\su\dom(f)$ is compact.  Let $C=\bigcup_{x\in A} C_x$. By compactness, let $C'$ be a compact set such that $C\su int (C')\su C'\su \dom(f)$. Let $\partial C'$ be the boundary of $C'$. Consider the following $\tau_{co}$ open set
\[
O=\langle (X\setminus V)\cap M; X\setminus K\rangle\cap \langle \partial C'; X\setminus f(L)\rangle\cap \bigcap_{x\in A}\langle \{x\}; U_x\rangle.
\]
We left to the reader the verification that $f\in O\su \langle K; V\rangle^{-1}$ (see \cite{dijkstra}).
\endproof

To end this section, we  show the interplay between the compact-open topology and partial actions of topological groups.    
A {\em topological partial action} of $G$ on $X$ is a unital premorphism $\theta: G\to \Gamma(X).$ 
Part (ii) of the following theorem provides the natural notion of  partial action (resp. nice partial action), as it was introduced in  \cite[Definition 1.1]{abadie}  (resp. \cite[P. 105]{KL}).

\begin{teo}
\label{minimality3} Let $X$ be a locally compact  Hausdorff space, $G$ be  a topological group and  $\theta : G \to \Gamma(X)$   be a  topological  partial action.  Let
\[
G\ast X=\{(g,x)\in G\times X:\; x\in \dom(\theta(g))\}=\{(g,x)\in G\times X:\; (\theta(g), x)\in C_{od}(X)\ast X\}.
\]
The following are equivalent:
\begin{itemize}
\item[(i)]  $\theta$ is continuous with respect to the  topology $\tau_{ico}$.

\item[(ii)] $G\ast X$ is open in $G\times X$ and $m:G\ast X\to X$ given by $m(g, x)=g\cdot x$ is continuous.
\end{itemize}

\end{teo}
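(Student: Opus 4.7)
The plan is to reduce this equivalence to Theorem \ref{minimality2} by exploiting the premorphism identity $\theta(g^{-1})=\theta(g)^{-1}$ together with the continuity of inversion in the topological group $G$. Since $\tau_{co}\subseteq \tau_{ico}$ on $\Gamma(X)$, and $\Gamma(X)$ is a subspace of $C_{od}(X)$, Theorem \ref{minimality2} applies directly to $h:=\theta:G\to C_{od}(X)$; the only work is to account for the additional subbasic sets $\langle K;V\rangle^{-1}$ that distinguish $\tau_{ico}$ from $\tau_{co}$.

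For (i) $\Rightarrow$ (ii), I would first observe that a $\tau_{ico}$-continuous map is a fortiori $\tau_{co}$-continuous. Hence $\theta$ is $\tau_{co}$-continuous as a map into $C_{od}(X)$, and Theorem \ref{minimality2} applied to it delivers both the openness of $G\ast X$ in $G\times X$ and the continuity of the action map $m(g,x)=\theta(g)(x)$.

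For (ii) $\Rightarrow$ (i), Theorem \ref{minimality2} first gives $\tau_{co}$-continuity of $\theta$, so preimages of the subbasic sets $\langle K;V\rangle$ are open in $G$. The main step is then to check that preimages of sets of the form $\langle K;V\rangle^{-1}$ are also open. Using the premorphism identity $\theta(g^{-1})=\theta(g)^{-1}$ one computes
\[
\theta^{-1}\bigl(\langle K;V\rangle^{-1}\bigr)=\{g\in G:\theta(g)^{-1}\in\langle K;V\rangle\}=\{g\in G:\theta(g^{-1})\in\langle K;V\rangle\}=\iota^{-1}\bigl(\theta^{-1}(\langle K;V\rangle)\bigr),
\]
where $\iota:G\to G$ denotes the inversion $g\mapsto g^{-1}$. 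The right-hand side is open because $\theta^{-1}(\langle K;V\rangle)$ is open by the previous step and $\iota$ is continuous, $G$ being a topological group. This last manipulation is the only place where the group structure on $G$ is genuinely used beyond its topology, and it is the essential ingredient that extends the semigroup-level statement of Theorem \ref{minimality2} to the inverse-semigroup topology $\tau_{ico}$.
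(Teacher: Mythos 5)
Your proposal is correct and follows essentially the same route as the paper: reduce to Theorem \ref{minimality2} for the $\tau_{co}$-subbasic sets, and then handle the extra sets $\langle K;V\rangle^{-1}$ via the premorphism identity $\theta(g^{-1})=\theta(g)^{-1}$ together with the continuity of inversion in $G$ (the paper writes the resulting preimage as $(\theta^{-1}(\langle K;V\rangle))^{-1}$, which is exactly your $\iota^{-1}(\theta^{-1}(\langle K;V\rangle))$). No gaps.
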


\proof  We check that the proof of Theorem \ref{minimality2} works in this case too. To see  that $(i)$ implies $(ii)$, we just observe that $\Gamma(X)\ast X=(C_{od}(X)\ast X)\cap (\Gamma(X)\times X)$, so $\Gamma(X)\ast X$ is open in $\Gamma(X)\times X$. The rest of the argument is the same. 

For $(ii)$ implies $(i)$, we only need to verify that $\theta^{-1}(\langle K, V\rangle^{-1})$ is open in $G$. Since  $\theta(g^{-1})=\theta (g)^{-1}$ for all $g\in G$,  we have
\[
\theta^{-1}(\langle K; V\rangle^{-1})=(\theta^{-1}(\langle K; V\rangle))^{-1}.
\]
By  Theorem \ref{minimality2},  $\theta^{-1}(\langle K; V\rangle)$ is open in $G$. Since 
 $g\mapsto g^{-1}$ is a homeomorphism, the set above is open in $G$. 
\endproof

\subsection{An inverse semigroup Hausdorff topology on $\Gamma(X)$}

As a consequence of the proof of  Theorem \ref{prop-basicas}(i), it follows that   $\tau_{ico}$ is not always $T_1$. In this section we extend  $\tau_{ico}$ to obtain a Hausdorff topology on $\Gamma(X)$.  We start presenting a general result about inverse semigroup topologies.

\begin{prop}
\label{inverset2}  Let $S$ be an inverse semigroup and $\tau$ be an inverse semigroup topology on $S$. Then $\tau$ is $T_2$ iff the natural partial order $\preceq$ is closed in $S\times S.$ 
In particular, an inverse semigroup topology $\tau$ on $\Gamma(X)$ is $T_2$ iff the relation $f\subseteq g$ is closed. 

\end{prop}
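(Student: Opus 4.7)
The plan is to reduce closedness of $\preceq$ to closedness of the diagonal $\Delta_S\subseteq S\times S$, which is the standard characterization of Hausdorffness. The bridge is the algebraic identity
$$s\preceq t \iff s = ss^{-1}t,$$
which holds in any inverse semigroup (if $s=et$ with $e\in E(S)$, then $ss^{-1}=ete(ts^{-1})$ simplifies to $ss^{-1}=et et s^{-1} \cdot \ldots$; more cleanly, one just notes that $ss^{-1}$ is the smallest idempotent witnessing $s\preceq t$).

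For the direction $(\Rightarrow)$, assuming $\tau$ is $T_2$, the map $F:S\times S\to S\times S$ defined by $F(s,t)=(s,\, ss^{-1}t)$ is continuous because multiplication and inversion are. The identity above gives $\preceq = F^{-1}(\Delta_S)$, and since $\Delta_S$ is closed in the Hausdorff space $S\times S$, so is $\preceq$.

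For the converse $(\Leftarrow)$, suppose $\preceq$ is closed. The swap $\sigma:S\times S\to S\times S$, $\sigma(s,t)=(t,s)$, is a self-homeomorphism, so $\succeq = \sigma(\preceq)$ is also closed. Antisymmetry of the natural partial order yields $\Delta_S = \preceq \cap \succeq$, which is therefore closed as an intersection of two closed sets; hence $\tau$ is $T_2$.

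For the ``In particular'' clause, I would invoke the well-known fact that in $\Gamma(X)$ the natural partial order coincides with graph inclusion: $f\preceq g$ iff $f = \mathrm{id}_U\circ g$ for some open $U\subseteq X$, iff $\dom(f)\subseteq \dom(g)$ and $g\!\restriction\!\dom(f)=f$, iff $f\subseteq g$ viewed as subsets of $X\times X$. The claim then follows from the general statement. The only mildly delicate point is the equivalence $s\preceq t \Leftrightarrow s=ss^{-1}t$, but this is completely standard and the rest is formal.
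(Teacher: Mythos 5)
Your proof is correct and follows essentially the same route as the paper: both use the continuous map $(s,t)\mapsto(s,ss^{-1}t)$ together with the identity $s\preceq t\Leftrightarrow s=ss^{-1}t$ to pull $\preceq$ back from the diagonal, and both recover the diagonal from antisymmetry, $\Delta=\,\preceq\cap\succeq$. Your treatment of the ``in particular'' clause (natural order in $\Gamma(X)$ equals graph inclusion) fills in a step the paper leaves implicit, and is fine.
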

\proof Recall that $\tau$ is $T_2$ iff $\Delta=\{(s,s): \; s\in S\}$ is closed. Let $\varphi:S\times S\to S\times S$ given by $\varphi(s,t)=(s,ss^{-1}t)$. Notice that $\varphi$ is continuous and $s\preceq t$ iff $\varphi(s,t)\in \Delta$.  Thus if $\Delta$ is closed, then $\preceq$ is closed. Conversely, suppose $\preceq$ is closed. Notice that $(s,t)\in \Delta$ iff $s\preceq t$ and $t\preceq s$. Thus $\Delta$ is closed. 
\endproof

To define our new topology, consider the functions ${\sf D}, {\sf I}:\Gamma(X)\to CL(X)$ given by ${\sf D}(f)=X\setminus \dom(f)$ and ${\sf I}(f)=X\setminus \im(f)$.  We show that any Hausdorff inverse semigroup topology on $\Gamma(X)$ will necessarily make  $\sf D$ and $\sf I$ continuous for the Fell topology.

\begin{prop}
Let $\tau$ be an inverse semigroup Hausdorff topology on $\Gamma(X)$ extending $\tau_{ico}$.  Then $\sf D$ and $\sf I$ are continuous for the Fell topology.
\end{prop}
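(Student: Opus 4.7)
The plan is to reduce the continuity of $\sf D$ (and symmetrically $\sf I$) to the continuity of the maps $f\mapsto f^{-1}f$ and $f\mapsto ff^{-1}$, which send $f$ to the idempotents $e_{\dom(f)}$ and $e_{\im(f)}$ respectively. Since $\tau$ is an inverse semigroup topology, both multiplication and inversion are $\tau$-continuous, so these maps are $\tau$-continuous. Because the Fell topology is generated by the subbasis $\{V^-:V\text{ open}\}\cup\{W^+:W\text{ open with compact complement}\}$, it suffices to show that ${\sf D}^{-1}(V^-)$ and ${\sf D}^{-1}(W^+)$ are $\tau$-open for every such $V$ and $W$.

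The case $W^+$ is immediate: if $K=X\setminus W$ is compact, then
\[
{\sf D}^{-1}(W^+)=\{f\in\Gamma(X):X\setminus\dom(f)\su W\}=\{f:K\su\dom(f)\}=\langle K;X\rangle,
\]
which is $\tau_{co}$-open, hence $\tau$-open since $\tau\supseteq\tau_{ico}\supseteq\tau_{co}$. For $V^-$ I will instead argue that the complement $C_V=\{f\in\Gamma(X):V\su\dom(f)\}$ is $\tau$-closed. The key observation is that for any open $U\su X$, the element $e_U=\mathrm{id}_U$ lies in $\Gamma(X)$, and for idempotents one has $e_V\preceq e_U\iff V\su U$; in particular $V\su\dom(f)\iff e_V\preceq f^{-1}f$. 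Consider the continuous map $\varphi:\Gamma(X)\to\Gamma(X)\times\Gamma(X)$ given by $\varphi(f)=(e_V,f^{-1}f)$. By Proposition \ref{inverset2}, the Hausdorffness of $\tau$ implies that $\preceq$ is closed in $\Gamma(X)\times\Gamma(X)$, so $C_V=\varphi^{-1}(\preceq)$ is $\tau$-closed, as required. The continuity of $\sf I$ is entirely analogous, using the $\tau$-continuous map $f\mapsto ff^{-1}=e_{\im(f)}$ in place of $f\mapsto f^{-1}f$.

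The main (and only real) obstacle is handling the subbasic sets $V^-$ for an arbitrary open $V$, whose complement need not be compact: these are precisely the pieces of the Fell topology which are not already captured by $\tau_{co}$. The Hausdorff assumption on $\tau$ is used in exactly this step, through the closedness of $\preceq$ provided by Proposition \ref{inverset2}; everything else is a direct calculation using continuity of the inverse semigroup operations.
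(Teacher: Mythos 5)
Your proof is correct and follows essentially the same route as the paper: the $W^+$ case is handled via $\langle W^c; X\rangle\in\tau_{co}$, and the $V^-$ case via the closedness of the natural partial order (Proposition \ref{inverset2}) pulled back along the continuous map $f\mapsto(\mathrm{id}_V,f^{-1}f)$. The only cosmetic difference is that the paper deduces the continuity of $\sf I$ from ${\sf I}(f)={\sf D}(f^{-1})$ and continuity of inversion rather than repeating the argument with $f\mapsto ff^{-1}$, which is an equivalent observation.
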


\proof First of all, observe that  ${\sf I}(f)={\sf D}(f^{-1})$, thus if ${\sf D}$ is continuous, so is ${\sf I}$.
Let $V,W$ be  open sets of  $X$ with $W^c$ compact. We recall that the subbasic open sets of $CL(X)$ are 
$V^-=\{K\in K(X): K\cap V\neq \emptyset\}$ and $W^+ = \{K\in K(X): K\su W\}$. Notice that ${\sf D}^{-1}(W^+)= \langle W^c, X\rangle$  which belongs to $\tau_{co}$. On the other hand,   
\[
{\sf D}^{-1}(V^-)= \{f\in\Gamma(X):\; {\rm id}_V\not\su f^{-1}\circ f\}.
\] 
Thus, we only need to show that ${\sf D}^{-1}(V^-)$ is $\tau$-open. This follows using the fact that $\su$ is the natural order in $\Gamma(X)$ and  Proposition \ref{inverset2} which says that $\su$ is a $\tau$-closed subset of $\Gamma(X)\times \Gamma(X)$ (observe that the function $f\mapsto f^{-1}\circ f$ is $\tau$-continuous).
\endproof

In view of the previous result, we introduce an extension of $\tau_{ico}$ as follows. Let $\tau_{hco}$ be the topology generated by $\tau_{ico}$ together with the sets ${\sf D}^{-1}(U)$ and ${\sf I}^{-1}(U)$ for $U$ a Fell open subset of $CL(X)$.

\begin{prop}
Let $X$ be a locally compact Hausdorff space. Then $(\Gamma(X), \tau_{hco})$ is a Hausdorff topological inverse semigroup.
\end{prop}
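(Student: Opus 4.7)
The plan is to verify three things in turn: the Hausdorff axiom, continuity of inversion $\iota\colon f\mapsto f^{-1}$, and continuity of composition $c(f,g)=f\circ g$. Since $\tau_{hco}$ refines $\tau_{ico}$ and Theorem~\ref{topico} already gives continuity of both operations for $\tau_{ico}$, I only need to check each operation against the new subbasic open sets ${\sf D}^{-1}(U)$ and ${\sf I}^{-1}(U)$ with $U$ a Fell subbasic open set of $CL(X)$.

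I would start with the easy parts. For the Hausdorff axiom, given $f\neq g$, I split on whether $\dom(f)=\dom(g)$. When the domains differ, ${\sf D}(f)\neq {\sf D}(g)$ in $CL(X)$ and Hausdorffness of the Fell topology furnishes separating sets that pull back via ${\sf D}$. Otherwise, some $x\in \dom(f)=\dom(g)$ satisfies $f(x)\neq g(x)$, and disjoint open neighbourhoods $V_1,V_2$ of these points in $X$ yield the disjoint $\tau_{co}$-open sets $\langle\{x\};V_1\rangle$ and $\langle\{x\};V_2\rangle$. Continuity of $\iota$ is then essentially formal: on the $\tau_{ico}$ generators it is Theorem~\ref{topico}, while the identities ${\sf D}\circ\iota={\sf I}$ and ${\sf I}\circ\iota={\sf D}$ mean $\iota$ simply permutes the new subbasic sets among themselves.

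Continuity of composition is the substantial step. Using $(f\circ g)^{-1}=g^{-1}\circ f^{-1}$, I reduce the ${\sf I}^{-1}(U)$ case to the ${\sf D}^{-1}(U)$ case via the already continuous $\iota$ and the coordinate swap, and then verify that $c^{-1}({\sf D}^{-1}(U))$ is $\tau_{hco}$-open for each Fell subbasic $U$. When $U=W^+$ with $W^c$ compact, the preimage is $\{(f,g):W^c\subseteq\dom g \text{ and } g(W^c)\subseteq\dom f\}$. Around $(f_0,g_0)$, local compactness lets me interpose an open $U'$ and a compact $L$ with $g_0(W^c)\subseteq U'\subseteq L\subseteq \dom(f_0)$, and then $\langle L;X\rangle\times \langle W^c;U'\rangle$ is the required $\tau_{ico}$-open neighbourhood.

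The main obstacle is the subbasic set $U=V^-$ for an arbitrary open $V\subseteq X$, since $V$ need not have compact complement. A point $(f_0,g_0)\in c^{-1}({\sf D}^{-1}(V^-))$ falls into one of two cases. Either $V\not\subseteq\dom(g_0)$, in which case $\Gamma(X)\times {\sf D}^{-1}(V^-)$ already does the job; or $V\subseteq\dom(g_0)$ and there is $x_0\in V$ with $y_0:=g_0(x_0)\notin\dom(f_0)$. In the latter case $g_0(V)$ is open and contains $y_0\in {\sf D}(f_0)$, so local compactness of $X$ yields a compact $K$ with $y_0\in {\rm int}(K)\subseteq K\subseteq g_0(V)$. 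Then ${\sf D}^{-1}({\rm int}(K)^-)\times \langle K;V\rangle^{-1}$ is a $\tau_{hco}$-open neighbourhood of $(f_0,g_0)$, and any $(f,g)$ in it satisfies $\emptyset\neq {\sf D}(f)\cap {\rm int}(K)\subseteq {\sf D}(f)\cap g(V)$, which provides a point $x\in V\cap\dom(g)$ with $g(x)\notin \dom(f)$; this witnesses $x\in V\cap {\sf D}(f\circ g)$, so $f\circ g\in {\sf D}^{-1}(V^-)$ as needed.
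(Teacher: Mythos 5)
Your proof is correct and follows essentially the same route as the paper: the same two-case Hausdorff separation, the same reduction of the ${\sf I}^{-1}$ sets to the ${\sf D}^{-1}$ sets via inversion and the coordinate swap, and the same key neighbourhood ${\sf D}^{-1}(W^-)\times\langle\overline{W};V\rangle^{-1}$ (your $W={\rm int}(K)$) for the delicate $V^-$ case. The only cosmetic difference is that for $W^+$ you build an explicit $\tau_{ico}$-neighbourhood, whereas the paper just notes ${\sf D}^{-1}(W^+)=\langle W^c;X\rangle$ and invokes the already established continuity of composition for the compact-open topology.
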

	
\proof 
\noindent To see that $\tau_{hco}$ is Hausdorff, let $f,g\in \Gamma(X)$ with $f\neq g$. There are two cases to be considered. (a) Suppose $\dom(f)=\dom(g)$. Then there is $x\in \dom(f)$ such that $f(x)\neq g(x)$. Let  $U_1$ and $U_2$ be  disjoint open sets in $X$ with $f(x)\in U_1$ and $g(x)\in U_2$. Then $\langle \{x\}, U_1\rangle$ and $\langle \{x\}, U_2\rangle$ separates $f$ from $g$.  (b) Suppose $\dom(f)\neq\dom(g)$. Since $CL(X)$ is Hausdorff, there are disjoint open sets $O$ and $U$ in $CL(X)$ such that $\dom(f)^c\in O$ and $\dom(g)^c\in U$. Then ${\sf D}^{-1}(O)$ and ${\sf D}^{-1}(U)$ are the required disjoint $\tau_{hco}$-open sets separating $f$ from $g$. 

To see that the inversion map $i$ is continuous just observe that $i^{-1}({\sf D}^{-1}(V^+))={\sf I}^{-1}(V^+)$ and the same with $V^-$. 

Let $c(f,g)=f\circ g$. We check that $c$ is continuous. Let $V\su X$ be an open set.  First of all, $c^{-1}({\sf D}^{-1}(V^+))=c^{-1}(\langle V^c, X\rangle)$ which belongs to $\tau_{ico}$ when $V^c$ is compact. On the other hand, we claim that  $c^{-1}({\sf D}^{-1}(V^-))$ is equal to the following set:
\[
\Gamma(X)\times {\sf D}^{-1}(V^-)\;\cup\;\bigcup\;\{{\sf D}^{-1}(W^-)\times \langle \overline{W}, V\rangle^{-1}:\; \emptyset\neq W\su X\;\text{is  open with $\overline{W}$ compact}\}.
\]
In fact, let $f, g$ be such that  $f\circ g\in {\sf D}^{-1}(V^-)$. There are two cases to be considered. If $g\in 
{\sf D}^{-1}(V^-)$, there is nothing to show. Otherwise, suppose  that  $\dom(g)^c\cap V=\emptyset$, i.e. $V\su \dom(g)$. Thus, as 
$\dom(f\circ g)\in V^-$, there is $x\in V$ such that $g(x)\not\in \dom(f)$. Let $W\su X$ be an open  set such that $g(x)\in W\su\overline{W}\su g(V)$ and $\overline{W}$ is compact. Hence $f\in {\sf D}^{-1}(W^-) $ and $g\in\langle \overline{W}, V\rangle^{-1}$.

Conversely, if $(f,g)\in\Gamma(X)\times {\sf D}^{-1}(V^-)$, clearly $f\circ g\in {\sf D}^{-1}(V^-)$. Now suppose that $(f,g)\in {\sf D}^{-1}(W^-)\times \langle \overline{W}, V\rangle^{-1}$ for some non empty open subset $W\su X$. Let $y \in W\cap \dom(f)^c$. Since $\overline{W}\su g(V)$, there is $x\in V$ such that $y=g(x)$. Therefore $x\not\in \dom(f\circ g)$ and $(f\circ g)\in {\sf D}^{-1}(V^-)$.

Finally, observe that  ${\sf I}^{-1}(V^+)= i^{-1}({\sf D}^{-1}(V^+))$ which shows that  $c^{-1}({\sf I}^{-1}(V^+))$ is $\tau_{hco}$-open. Analogously for $V^-$. 
\endproof

\bigskip

From the previous result and Theorem \ref{minimality2} we get the following. 

\begin{teo}
\label{hco-minimal}
Let $X$ be a locally  compact Hausdorff space. Then $\tau_{hco}$ is the smallest Hausdorff topology on $\Gamma(X)$ that makes it a topological inverse semigroup such that  the evaluation map $ev$ is continuous and $\Gamma(X)\ast X$ is open in $\Gamma(X)\times X$.  
\end{teo}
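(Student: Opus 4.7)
\medskip

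\noindent\textbf{Proof plan.} The previous proposition already shows that $\tau_{hco}$ is a Hausdorff inverse semigroup topology on $\Gamma(X)$. To see that it meets the remaining two requirements, note that $\tau_{hco}\supseteq \tau_{co}$, so  by Theorem \ref{minimality2} applied to the identity map $(\Gamma(X),\tau_{hco})\to (\Gamma(X),\tau_{co})\subseteq (C_{od}(X),\tau_{co})$, the set $\Gamma(X)\ast X$ is open in $\Gamma(X)\times X$ and the evaluation map is continuous. Hence $\tau_{hco}$ is a member of the class in question.

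For minimality, let $\tau$ be any Hausdorff topology on $\Gamma(X)$ satisfying the stated hypotheses. The plan is to show, in three stages, that $\tau_{hco}\subseteq \tau$:

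\emph{Stage 1: $\tau_{co}\subseteq \tau$.} Apply Theorem \ref{minimality2} to the map $h:(\Gamma(X),\tau)\hookrightarrow C_{od}(X)$, which is nothing but the inclusion viewed as a function into $C_{od}(X)$. The hypothesis on $\tau$ gives that $M\ast X$ (with $M=(\Gamma(X),\tau)$) is open in $M\times X$ and that the associated evaluation is continuous. The conclusion of that theorem is precisely that $h$ is $\tau_{co}$-continuous, i.e. $\tau_{co}\subseteq \tau$ (after relativizing to $\Gamma(X)$ as always).

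\emph{Stage 2: $\tau_{ico}\subseteq \tau$.} Since $\tau$ is an inverse semigroup topology, the inversion map $f\mapsto f^{-1}$ is $\tau$-continuous. Applying this to the inclusion $\tau_{co}\subseteq \tau$ from Stage 1 gives $(\langle K;V\rangle)^{-1}\in \tau$ for every compact $K$ and open $V$. Thus all subbasic sets of $\tau_{ico}$ lie in $\tau$.

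\emph{Stage 3: $\tau_{hco}\subseteq \tau$.} This is exactly the content of the earlier proposition in this section: any Hausdorff inverse semigroup topology on $\Gamma(X)$ extending $\tau_{ico}$ must make ${\sf D}$ and ${\sf I}$ continuous for the Fell topology, which is to say that all subbasic sets ${\sf D}^{-1}(U), {\sf I}^{-1}(U)$ with $U$ Fell-open belong to $\tau$. Combined with Stage 2, this yields $\tau_{hco}\subseteq \tau$.

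The only step that does any work is Stage 1, and even there the heavy lifting has already been done by Theorem \ref{minimality2}; one simply has to notice that its hypotheses transfer verbatim from $C_{od}(X)$ to the subsemigroup $\Gamma(X)$ because $\Gamma(X)\ast X=(C_{od}(X)\ast X)\cap(\Gamma(X)\times X)$, so openness in one ambient space translates into openness in the other. No further technicalities are expected, and there is no genuine obstacle.
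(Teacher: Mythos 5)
Your proof is correct and is essentially the argument the paper intends: the paper gives no written proof beyond the remark that the theorem follows "from the previous result and Theorem \ref{minimality2}", and your three stages (Theorem \ref{minimality2} forces $\tau_{co}\subseteq\tau$, continuity of inversion upgrades this to $\tau_{ico}\subseteq\tau$, and the proposition on continuity of ${\sf D}$ and ${\sf I}$ supplies the remaining Fell-type generators) are exactly the chain of citations being invoked. Nothing is missing.
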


\begin{rem}
\label{taupp} {\rm
When $X$ is a discrete space, $\Gamma(X)$ is the inverse symmetric semigroup $I(X)$ of all partial bijections between subsets of $X$. In this case,  it is easy to see that  $\tau_{hco}$ is generated by the sets $v(x,y)=\{f\in I(X):\; f(x)=y\}$, $w_1(x)=\{f\in I(X):\; x\nin \dom(f)\}$ and $w_2(x)=\{f\in I(X): \; x\nin\im(f)\}$. This topology  was studied in \cite{elliott2020,PerezUzca2020}.}
 \end{rem}

 A natural example in the various
areas where partial actions of groups appear are  the  {\it induced partial actions,} whose construction in the topological context  is as follows:  Let  $X$ be a topological space,  $G$ be a topological group, $a:G\times X\to X$ be an action and  $Y\subseteq X$  be an open set. For $g\in G$, set $Y_g=\{y\in Y\mid a(g,x)=y \,\,\text{for some}\,\, x\in Y\}$ and  $\tilde{a}:G\to  \Gamma(Y),$ where $\tilde{a}_g:Y_{g^{-1}}\ni y\mapsto a(g,y)\in  Y_g$. Then $\tilde{a}$ is a unital premorphism called the  induced partial action of $a$ to $Y.$ Our next result strengthens Theorem \ref{minimality3}, we show that a continuous action induces a continuous partial action with respect to $\tau_{hco}.$

\begin{prop}\label{inducon}
Let $X$ be locally compact Hausdorff  space  and $G$ be a topological group. Let  $a:G\times X\to X$ be a continuous action and  $Y\subseteq X$ be an open set. Then, the induced partial action  $\tilde{a}:G\to  (\Gamma(Y), \tau_{hco})$ is continuous.
\end{prop}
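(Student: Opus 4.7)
The plan is to use the defining subbasis of $\tau_{hco}$ (namely, the $\tau_{ico}$-subbasic sets together with ${\sf D}^{-1}(U)$ and ${\sf I}^{-1}(U)$ for $U$ Fell-open in $CL(Y)$) and verify that each pulls back under $\tilde{a}$ to an open set in $G$. For the $\tau_{ico}$ part I would invoke Theorem \ref{minimality3}: the set $G\ast Y$ equals $a^{-1}(Y)\cap(G\times Y)$, which is open in $G\times Y$ since $a$ is continuous and $Y$ is open, and the associated action map $m(g,y)=\tilde{a}_g(y)$ is the restriction of $a$, hence continuous. Moreover, since $\tilde{a}_{g^{-1}}=(\tilde{a}_g)^{-1}$, we have ${\sf I}\circ\tilde{a}={\sf D}\circ\tilde{a}\circ\iota$ with $\iota(g)=g^{-1}$ continuous, so Fell-continuity of ${\sf I}\circ\tilde{a}$ reduces to that of ${\sf D}\circ\tilde{a}$.

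It then remains to show that ${\sf D}\circ\tilde{a}:G\to CL(Y)$ is Fell-continuous, i.e., the pullbacks of the subbasic sets $W^+$ (with $Y\setminus W$ compact) and $V^-$ (with $V\subseteq Y$ open) are open in $G$. For $W^+$, setting $K=Y\setminus W$,
\[
({\sf D}\circ\tilde{a})^{-1}(W^+)=\{g\in G:\ K\subseteq Y_{g^{-1}}\}=\{g\in G:\ \{g\}\times K\subseteq a^{-1}(Y)\};
\]
since $a^{-1}(Y)$ is open in $G\times X$ and $K$ is compact, the tube lemma provides openness. For $V^-$, the pullback is $O:=\{g\in G:\ \exists\,y\in V,\ a(g,y)\notin Y\}$. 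Using the group structure, the condition ``$a(g,y)\notin Y$ for some $y\in V$'' is equivalent to the existence of $z\in X\setminus Y$ with $a(g^{-1},z)\in V$ (set $z=a(g,y)$, respectively $y=a(g^{-1},z)$). Hence $O=\pi_1(E)$ where
\[
E=\{(g,z)\in G\times(X\setminus Y):\ a(g^{-1},z)\in V\}.
\]
Since the map $(g,z)\mapsto a(g^{-1},z)$ is continuous (using continuity of $a$ and of inversion in $G$), $E$ is open in the product space $G\times(X\setminus Y)$; projections from a product onto a factor are always open, so $O$ is open.

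The main technical obstacle is the $V^-$ case. Using only the continuity of $a$, the most direct attempt would project the closed set $\{(g,y)\in G\times V:\ a(g,y)\in X\setminus Y\}$ to $G$, and projections of closed sets are generally not open. The group structure is what makes this work: substituting $y=a(g^{-1},z)$ with $z$ ranging over the closed set $X\setminus Y$ converts the existential condition into an open condition on the pair $(g,z)$, so one ends up projecting a genuinely open subset of a product space, whose image is automatically open in $G$.
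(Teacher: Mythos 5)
Your proof is correct and takes essentially the same route as the paper's: both handle the $\tau_{ico}$ part via Theorem \ref{minimality3} and then treat the only delicate case, $\tilde a^{-1}({\sf D}^{-1}(V^-))$, by using the group structure to trade the witness $y\in V$ for the fixed point $z=a(g,y)\in X\setminus Y$. The paper packages this as an explicit symmetric neighbourhood $g_0W$ with $a(W\times\{x_0\})\subseteq V$, whereas you phrase it as the open projection of the open set $E$; the remaining subbasic cases ($W^+$ via the tube lemma, ${\sf I}$ via inversion) that you spell out are left implicit in the paper but your treatment of them is correct.
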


\proof
We first observe that 
\[
G\ast Y=\{(g,y)\in G\times Y:\; y\in Y_{g^{-1}}\}=\{(g,y)\in G\times Y:\; a(g^{-1}, y)\in Y\}
\]
is open in $G\times Y$ as $a$ is continuous and $Y$ is open in $X$. By  Theorem \ref{minimality3},  $\tilde{a}$ is open with respect to $\tau_{ico}$. Thus, it suffices to show that $O=\tilde{a}^{-1}({\sf D}^{-1}(V^-))$ is open for every non empty open set $V\subseteq Y$.
Let $g_0\in O$ and $x_0\in V$ such that $a(g_0,x_0)\nin Y$. Let $W\subseteq G$ be an open nbhd of $1_G$ such that $W=W^{-1}$ and $a(W\times\{x_0\})\subseteq V$. We claim that $g_0W\subseteq O$. Let  $h\in W$. Then $h^{-1}\in W$, thus $a(h^{-1},x_0)\in V$ and
$
a(g_0h, a(h^{-1},x_0)) =a(g_0h h^{-1},x_0)=a(g_0,x_0)\nin Y
$
that is, $a(h^{-1},x_0)\nin\dom (\tilde{a}(g_0h))$, i.e. $g_0h\in O$. 
\endproof

\subsection{On the collection of idempotents of $\Gamma(X)$}

We end this section presenting some results about $E(\Gamma(X))$, the idempotents of $\Gamma(X)$, which will be used later to study the Birget-Rhodes expansion of a topological group.   It is easily seen that  $E(\Gamma(X))=\{{\rm id}_V:\; V\su X\; \text{is open}\}$, where ${\rm id}_V$ denotes the identity function on $V$. 

Recall that $CL(X)$ is an inverse semigroup under the operation $\cup$. 

\begin{prop}
\label{hco-compact}
Let $X$ be a locally compact Hausdorff space.  Then the spaces $CL(X$) and $(E(\Gamma(X)), \tau_{hco})$ are topologically isomorphic inverse semigroups. In particular, $E(\Gamma(X))$ is compact with respect to $\tau_{hco}$. 
\end{prop}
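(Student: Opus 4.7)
The natural candidate for the isomorphism is the map $\Phi \colon CL(X) \to E(\Gamma(X))$ sending a closed set $A$ to the identity on its complement, $\Phi(A) = {\rm id}_{X\setminus A}$. The plan is to verify that $\Phi$ is a bijective semigroup homomorphism, show it is a homeomorphism between the Fell topology and $\tau_{hco}$, and then derive compactness from the well-known compactness of $(CL(X), \text{Fell})$.

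First I would note that $\Phi$ is clearly a bijection with inverse ${\sf D}|_{E(\Gamma(X))} \colon {\rm id}_V \mapsto X\setminus V$, since the open subsets of $X$ are in bijection with $CL(X)$ via complementation. For the algebraic part,
\[
\Phi(A\cup B) = {\rm id}_{X\setminus(A\cup B)} = {\rm id}_{(X\setminus A)\cap (X\setminus B)} = {\rm id}_{X\setminus A}\circ {\rm id}_{X\setminus B} = \Phi(A)\circ \Phi(B),
\]
so $\Phi$ is a semigroup homomorphism from $(CL(X),\cup)$ to $(E(\Gamma(X)),\circ)$.

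Next I would check continuity in both directions. Continuity of $\Phi^{-1}$ is immediate: it is the restriction of $\sf D$ to $E(\Gamma(X))$, and $\sf D$ is continuous by the very definition of $\tau_{hco}$. For $\Phi$, it suffices to pull back subbasic $\tau_{hco}$-open sets. A direct computation gives: $\Phi^{-1}({\sf D}^{-1}(U)) = U$ and $\Phi^{-1}({\sf I}^{-1}(U)) = U$ for any Fell-open $U$ (since ${\sf D}({\rm id}_{X\setminus A}) = {\sf I}({\rm id}_{X\setminus A}) = A$); and for $K\su X$ compact and $V\su X$ open,
\[
\Phi^{-1}(\langle K;V\rangle) \;=\; \begin{cases} (X\setminus K)^+ & \text{if } K\su V,\\ \emptyset & \text{otherwise,}\end{cases}
\]
which is Fell-open because $X\setminus K$ is open with compact complement. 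The set $\langle K;V\rangle^{-1}$ coincides with $\langle K;V\rangle$ on idempotents, so the same computation handles it.

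Finally, since $(CL(X),\text{Fell})$ is compact Hausdorff (as cited in the preliminaries) and $\Phi$ is a homeomorphism, $(E(\Gamma(X)),\tau_{hco})$ is compact, yielding the "in particular" statement. I do not foresee a serious obstacle; the only point requiring a bit of care is keeping track of the two distinct subbases for $\tau_{hco}$ (the $\tau_{ico}$ generators and the Fell pullbacks) and noting the collapse $K\su\dom({\rm id}_{X\setminus A})$ iff $K\cap A=\emptyset$ that makes the $\langle K;V\rangle$ pullback Fell-open.
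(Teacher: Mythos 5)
Your proposal is correct and follows essentially the same route as the paper: the paper uses the very same map $K\mapsto {\rm id}_{K^c}$, verifies the homomorphism identity exactly as you do, and then simply asserts that continuity is ``straightforward to check'' (with compactness of $(CL(X),\text{Fell})$ and Hausdorffness of $\tau_{hco}$ making the continuous bijection a homeomorphism). You have merely written out the subbasis computations that the paper omits, and they are all accurate.
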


\proof Let $\varphi: CL(X)\to E(\Gamma(X))$ given by $\varphi(K)={\rm id}_{K^c}$. Notice that $\varphi(K\cup L) = {\rm id}_{(K\cup L)^c}={\rm id}_{K^c\cap L^c}={\rm id}_{K^c}\circ {\rm id}_{L^c} = \varphi(K)\circ\varphi(L)$. 
It is straightforward to check that  $\varphi$ is continuous.
\endproof

The following observation is straightforward.
\begin{prop}
Let $S$ be a topological inverse semigroup and $T\su S$ be a  subsemigroup (resp. inverse). Then  $\overline{T}$ is a subsemigroup (resp. inverse). 
\end{prop}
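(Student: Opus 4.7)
The plan is to use the two defining continuities of a topological inverse semigroup: continuity of multiplication $m:S\times S\to S$ and (in the inverse case) continuity of inversion $i:S\to S$. Throughout I will use the basic topological facts that for a continuous map $f:X\to Y$ and any $A\su X$ one has $f(\overline{A})\su \overline{f(A)}$, and that in a product space $\overline{A\times B}=\overline{A}\times\overline{B}$.

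First I would handle the subsemigroup claim. Assume $T\su S$ satisfies $m(T\times T)\su T$. Applying continuity of $m$ and the product-closure identity,
\[
m\bigl(\overline{T}\times\overline{T}\bigr)=m\bigl(\overline{T\times T}\bigr)\su \overline{m(T\times T)}\su \overline{T},
\]
which shows $\overline{T}$ is closed under multiplication and is therefore a subsemigroup of $S$.

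Now suppose further that $T$ is an inverse subsemigroup, so $i(T)\su T$ in addition to being closed under multiplication. By continuity of the inversion map,
\[
i\bigl(\overline{T}\bigr)\su \overline{i(T)}\su \overline{T},
\]
so $\overline{T}$ is closed under inversion as well. Combined with the first paragraph, $\overline{T}$ is a subsemigroup closed under the inversion operation of $S$; since the inverse of each element of $\overline{T}$ (taken in $S$) lies in $\overline{T}$ and inverses in $S$ are unique, $\overline{T}$ is an inverse subsemigroup.

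There is essentially no obstacle here: the argument is a direct consequence of the definitions, and the only thing to be careful about is invoking the product-closure identity $\overline{T\times T}=\overline{T}\times \overline{T}$ so that the continuity of the binary operation $m$ can be applied in one step. The statement and proof hold for arbitrary (not necessarily Hausdorff) topological inverse semigroups, which is why the remark is placed as a simple preliminary observation.
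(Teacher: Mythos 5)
Your proof is correct and is exactly the standard argument the authors have in mind: the paper labels this proposition a straightforward observation and omits the proof entirely, so your use of $f(\overline{A})\subseteq\overline{f(A)}$ together with $\overline{T\times T}=\overline{T}\times\overline{T}$ supplies precisely the intended justification. The closing remark that closure under the (unique) inversion of $S$ makes $\overline{T}$ an inverse subsemigroup is also the right way to finish.
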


\begin{prop}
\label{ext-to-hyp}
Let $X$ be a locally compact Hausdorff  space, $S\su\Gamma(X)$ be an inverse subsemigroup and $Z$ be a topological space. Let $\eta: Z\to (E(S),\tau_{hco})$ be a continuous function.  For  $A\su Z$, we set
\[
\psi(A)=\bigcap_{z\in A}\dom (\eta(z)). 
\]
Then
\begin{itemize}
\item[(i)] If $A$ is compact, then  $\psi(A)$ is open and ${\rm id}_{\psi(A)}\in \overline{S}$, where $\overline{S}$ is the $\tau_{hco}$-closure of $S$.

\item[(ii)] Let $\mathcal{I}:(K(Z),\tau_V)\to (E(\overline{S}),\tau_{hco})$ be defined by $\mathcal{I}(A)={\rm id}_{\psi(A)}$, where $\tau_V$ is the Vietoris topology. Then $\mathcal{I}$ is a continuous extension of  $\eta$. 
\end{itemize}
\end{prop}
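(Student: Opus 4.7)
The strategy is to work through Proposition \ref{hco-compact}: write $\eta(z)={\rm id}_{V_z}$ with $V_z=\dom(\eta(z))$ open, so that $z\mapsto V_z^c={\sf D}(\eta(z))$ is Fell-continuous from $Z$ into $CL(X)$, and $\psi(A)=\bigcap_{z\in A}V_z$.

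For the openness of $\psi(A)$ in (i), fix $x\in\psi(A)$. For each $z\in A$, local compactness of $X$ provides an open nbhd $W_z$ of $x$ with $\overline{W_z}$ compact and $\overline{W_z}\subseteq V_z$. The set $(X\setminus\overline{W_z})^+$ is Fell open, so
\[
O_z:=\eta^{-1}\bigl({\sf D}^{-1}((X\setminus\overline{W_z})^+)\bigr)=\{z'\in Z:\overline{W_z}\subseteq V_{z'}\}
\]
is an open nbhd of $z$. By compactness of $A$, finitely many $O_{z_1},\dots,O_{z_n}$ cover $A$, and then $W:=W_{z_1}\cap\cdots\cap W_{z_n}$ is an open nbhd of $x$ contained in $\psi(A)$. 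For ${\rm id}_{\psi(A)}\in\overline{S}$, note that ${\rm id}_V=({\rm id}_V)^{-1}$ and ${\sf D}({\rm id}_V)={\sf I}({\rm id}_V)=V^c$, so a basic $\tau_{hco}$-nbhd of ${\rm id}_{\psi(A)}$ restricted to idempotents asks exactly that ${\rm id}_U$ satisfy $H\cup L^c\subseteq U$ together with $V_j\not\subseteq U$ for $j=1,\dots,k$, where $H$ is compact with $H\subseteq\psi(A)$, $L$ is open with $L^c$ compact and $L^c\subseteq\psi(A)$, and each $V_j$ is open with $V_j\not\subseteq\psi(A)$. For each $j$ pick $z_j\in A$ with $V_j\not\subseteq V_{z_j}$ (available since $V_j\not\subseteq\psi(A)$) and set $U=V_{z_1}\cap\cdots\cap V_{z_k}$. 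Then ${\rm id}_U=\eta(z_1)\cdots\eta(z_k)\in S$, we have $H\cup L^c\subseteq\psi(A)\subseteq U$, and $V_j\cap V_{z_j}^c\subseteq V_j\cap U^c$ is nonempty, so ${\rm id}_U$ lies in the given nbhd.

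For (ii), I would verify directly that $\mathcal{I}^{-1}(\mathcal{O})$ is Vietoris open for each subbasic $\tau_{hco}$-open $\mathcal{O}$, in three cases (the ${\sf I}$-cases being identical to the ${\sf D}$-cases for idempotents). Given $A\in K(Z)$ with $\mathcal{I}(A)\in\mathcal{O}$: if $\mathcal{O}=\langle H;V\rangle$ with $H\subseteq\psi(A)\cap V$, then $\{B\in K(Z):B\subseteq\eta^{-1}(\langle H;X\rangle)\}$ is a Vietoris nbhd of $A$ mapped into $\langle H;V\rangle$; if $\mathcal{O}={\sf D}^{-1}(V^-)$ with $V\cap\psi(A)^c\neq\emptyset$, pick $z_0\in A$ with $V\not\subseteq V_{z_0}$ and use the Vietoris subbasic nbhd $\{B:B\cap\eta^{-1}({\sf D}^{-1}(V^-))\neq\emptyset\}$; if $\mathcal{O}={\sf D}^{-1}(L^+)$ with $L^c$ compact and $L^c\subseteq\psi(A)$, use $\{B:B\subseteq\eta^{-1}({\sf D}^{-1}(L^+))\}$. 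Continuity of $\eta$ makes each relevant preimage in $Z$ open, and the verifications that these Vietoris nbhds land in $\mathcal{O}$ are straightforward. That $\mathcal{I}$ extends $\eta$ is immediate: $\mathcal{I}(\{z\})={\rm id}_{V_z}=\eta(z)$.

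The main obstacle is the openness statement in (i): the pointwise information \emph{$x\in V_z$ for every $z\in A$} must be upgraded to a single open nbhd of $x$ lying inside every $V_z$, $z\in A$. This step genuinely requires compactness of $A$, local compactness of $X$, and the Fell input built into $\tau_{hco}$. Once it is in place, both the density claim in (i) and the continuity claim in (ii) reduce to bookkeeping exercises matching basic $\tau_{hco}$-nbhds of an idempotent against finite compositions $\eta(z_1)\cdots\eta(z_k)$ in $S$ and against Vietoris nbhds in $K(Z)$.
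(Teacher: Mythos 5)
Your proof is correct and follows essentially the same route as the paper's: compactness of $A$ plus continuity of $\eta$ gives a finite subcover yielding an open neighbourhood of each point of $\psi(A)$, the finite sub-intersections $\eta(z_1)\cdots\eta(z_k)={\rm id}_{V_{z_1}\cap\cdots\cap V_{z_k}}\in S$ witness ${\rm id}_{\psi(A)}\in\overline{S}$, and continuity of $\mathcal{I}$ is checked subbasic-set by subbasic-set. The only differences are presentational: the paper obtains openness of $\psi(A)$ by invoking the openness of $Z\ast X$ from Theorem \ref{minimality2} and phrases the density claim as convergence of the net $\{{\rm id}_{\psi(F)}:F\subseteq A\ \text{finite}\}$, where you unwind both directly against basic $\tau_{hco}$-neighbourhoods.
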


\proof (i) Suppose $A\subseteq Z$ is compact. We show that $\psi(A)$ is open.   By Theorem \ref{minimality2}, the following set is open 
\[
Z\ast X= \{(z,x)\in Z\times X:\; x\in \dom(\eta(z))\}.
\]
Let  $x\in \psi(A)$. Since $(z,x)\in Z\ast X$ for all $z\in A$ and $Z\ast X$ is open, there are open sets $V_z\su Z$ and $W_z\su X$ such that $(z,x)\in V_z\times W_z\su Z\ast X$. Since $A$ is compact, there are $z_1, \cdots, z_n\in A$ such that $A\su V_{z_1}\cup \cdots\cup V_{x_n}=V$. Thus  $\psi(V)\su \psi(A)$. 
 Notice that $x\in W_{z_1}\cap \cdots\cap W_{x_n}\su \psi(V)$, thus $\psi(A)$ is open. To see that ${\rm id}_{\psi(A)}\in \overline{S}$, we observe that 
 $\{{\rm id}_{\psi(F)}:\; F\su A\; \text{finite}\}$ is a net (under the partial order $\su$) in $S$  $\tau_{hco}$-converging to ${\rm id}_{\psi(A)}$.
 
(ii) Now we show that  $\mathcal{I}$ is continuous.  Suppose $\mathcal{I}(A)\in \langle K; V\rangle$, where $K\su V\su X$ with $K$ compact and $V$ open.  For each $z\in A$, $\eta(z)\in \langle K;V\rangle$, thus, by the continuity of  $\eta$, there is $V_z\su Z$ open such that $z\in V_z$ and $\eta(V_z)\in \langle K, V\rangle$. Let $U=\bigcup\{V_z:\; z\in A\}$. Then $\mathcal{I}(U^+)\su \langle K; V\rangle$.

Let $V\su X$ be an open subset of $X$. We show that $\mathcal{I}^{-1}({\sf D}^{-1}(V^-))$ is open in $K(Z)$. Take $A\in \mathcal{I}^{-1}({\sf D}^{-1}(V^-))$. Then there is $v\in V$ and  $z\in A$ such that $v\notin \dom(\eta(z))$, thus  $\eta(z)\in {\sf D}^{-1}(V^-)$. By the continuity of  $\eta$, there exists $W\subseteq Z$ open such that $z\in W$ and $\eta(W)\subseteq {\sf D}^{-1}(V^-)$. It is clear that $A\in W^{-}$ and  $\mathcal{I}(W^-)\subseteq {\sf D}^{-1}(V^-)$. In fact, if $B\in W^-$, let $w\in B\cap W\neq\emptyset$. Since, $\eta(w)\in {\sf D}^{-1}(V^-)$, there exists $v'\in V$ such that  $v'\notin \dom(\eta(w))$. Thus, $v'\notin \psi(B)$ and  $\mathcal{I}(B)={\rm id}_{\psi(B)}\in {\sf D}^{-1}(V^-)$.
\endproof

\section{ Birget-Rhodes expansion and Exel's co\-rres\-pondence.}
\label{BR}

%R. Exel \cite{E1} constructed  the monoid $\Sh(G)$ which plays a crucial role in the theory of partial actions and partial representations of groups  and $C^*$-algebras. He  found  a  one-to-one
 %correspondence between partial actions of a group $G$ on a set $X$ and actions of  $\Sh(G)$ on $I(X)$.  On the other hand,  Kellendonk and Lawson  showed that $\Sh(G)$ is isomorphic to the Birget-Rhodes expansion  $\widetilde{G}^R$ of $G$ (see Lemma 2.3 and Theorem 2.4 in \cite{KL}).  
 
We start by recalling  the definition of the  Birget-Rhodes expansion and Exel's correspondence.    The semigroup $\widetilde{G}^R$ was introduced and studied in \cite{BR}, but  we follow the presentation given in  \cite[Proposition 1]{SZ}. Let $\finitosg$ denote the collection of all finite subsets of $G$ and
 \[
\widetilde{G}^R=\{(A,g):\; A\in \finitosg,  \; \{1,g\}\subseteq A\}
\] 
with operation 
\[
(A,g)(B,h)=(A\cup gB, gh).
\]
The set $\widetilde{G}^R$  is an inverse semigroup where  $(A,g)^{-1}=(g^{-1}A, g^{-1}),$ for all $(A,g)\in \widetilde{G}^R.$ 

For the reader's convenience, we present Exel's correspondence in terms of $\widetilde{G}^R$.  Let $S$ be an  inverse monoid and $\theta: G\to S$ be a unital premorphism. Associated to $S$ and $\theta$,  a  semigroup homomorphism $\widetilde\theta:\widetilde{G}^R\to S$ is defined as follows (see  \cite[Theorem 2.4]{KL}): 
\begin{equation}
\label{what}
\widetilde\theta(\{1,g_1,\cdots,g_n\},g_n)=\theta(g_1)\theta(g_1)^{-1}\cdots\theta(g_{n-1}) \theta(g_{n-1})^{-1} \theta(g_n).
\end{equation}
Since $\theta(g) =\widetilde\theta(\{1,g\},g)$,  the map  $\widetilde\theta$ can be regarded as an extension of $\theta$.

Reciprocally, to each semigroup homomorphism $\rho:\widetilde{G}^R\to S$ is associated a unital premorphism $\widehat\rho:G\to S$ as follows: 
\begin{equation}\label{asoc}
\widehat\rho\,(g)=\rho(\{1,g\}, g).
\end{equation}
Kellendonk and Lawson \cite{KL} showed that  $\widehat\rho$ is the unique extension of $\rho$.

The purpose of this section is twofold. On the one hand, we show that the above correspondence can be extended to some inverse semigroups $T\supseteq \widetilde{G}^R$ and $S$.  On the other hand, we also show a topological version of this correspondence where  the group $G$, the semigroup $S$ are topological and the  maps $\theta$ and $\rho$ are continuous. 

\subsection{A generalization of Exel's correspondence}

Let $G$ be a group and $\mathcal{P}^*(G)$ be  the semilattice consisting of non-empty subsets of $G,$ with product $A \bullet B=A\cup B.$ The action $\beta: G\times \mathcal P^*(G)\ni (g, A)\mapsto gA\in \mathcal{P}^*(G)$   allows us to define the semidirect product  $\mathcal{P}^*(G)\rtimes_\beta G$  which is the set $\mathcal{P}^*(G)\times G$ with the operation defined by 
\[
(A,g)(B,h):=(A\bullet \beta_g(B), gh)=(A\cup gB, gh).
\]
then $\mathcal{P}^*(G) \rtimes_\beta G$ is an inverse semigroup. Notice that the  Birget-Rhodes expansion  $\widetilde{G}^R$ is a  subsemigroup of $\mathcal{P}^*(G)\rtimes_\beta G$.
 Since the idempotents in  $\mathcal{P}^*(G)\ltimes_\beta G$  are of the form $(C,1)$ with $C\in \mathcal{P}^*(G), $  the natural partial order is given by
\begin{equation}
\label{order}
(A,g)\leq (B,h)\Longleftrightarrow g=h \,\,\, {\rm and}\,\,\, B\subseteq A.
\end{equation}

It is natural to wonder whether  there is an analogous of  the Exel's correspondence  for  monoids  $T$   with  $\widetilde{G}_R \subseteq T \subseteq \mathcal{P}^*(G) \ltimes_\beta G.$   Such a $T$ will be called an {\it intermediate extension of $\widetilde{G}_R.$}  
Our next purpose is to show  that   the correspondence depends on the inverse semigroup $S$ involved.  

First we introduce a useful notion. Let $T$ be an intermediate extension of $\widetilde{G}_R.$ The {\em support} of $T$, denoted by  ${\bf Supp}(T)$, is defined as follows.
\[
{\bf Supp}(T)=\{A\in \mathcal{P}^*(G):\; (A,g)\in T\; \text{for some $g\in G$}\}. 
\]

Next we show some properties of the support that we need later. 

\begin{lema}
\label{latice}
Let $T$ be an intermediate extension of $\widetilde{G}_R.$ The following assertions hold.
\begin{enumerate}
\item [(i)] Any element of ${\bf Supp}(T)$ contains the identity of $G.$
\item [(ii)] If $T$ is inverse, then ${\bf Supp}(T)$ is a  subsemilattice of  $\mathcal{P}^*(G).$
\end{enumerate}
\end{lema}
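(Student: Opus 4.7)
For part (i), my plan is to exploit that $T$ is a monoid whose identity must coincide with $(\{1\},1)$. Writing the identity of $T$ as $(E,h)\in \mathcal{P}^*(G)\rtimes_\beta G$ and applying it to $(\{1\},1)\in \widetilde{G}^R\subseteq T$, the identity law reads
\[
(E\cup\{h\}, h)=(E,h)\cdot(\{1\},1)=(\{1\},1),
\]
forcing $h=1$ and $E=\{1\}$. Hence the identity of $T$ is $(\{1\},1)$. Then for any $(A,g)\in T$ the identity axiom gives $(\{1\}\cup A,g)=(\{1\},1)(A,g)=(A,g)$, so $1\in A$, which proves (i).

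For (ii), I would use the additional hypothesis that $T$ is inverse. Given $A,B\in{\bf Supp}(T)$ with $(A,g_A),(B,g_B)\in T$, the goal reduces to producing idempotents of the form $(A,1)$ and $(B,1)$ inside $T$, since their product is $(A\cup B,1)$. A brief uniqueness calculation shows that $\mathcal{P}^*(G)\rtimes_\beta G$ is itself an inverse semigroup with $(A,g)^{-1}=(g^{-1}A,g^{-1})$, and because $T$ is an inverse subsemigroup, its $T$-inverses must coincide with these ambient inverses. Consequently
\[
(A,g_A)(A,g_A)^{-1}=(A,g_A)(g_A^{-1}A,g_A^{-1})=(A\cup A,1)=(A,1)\in T,
\]
and likewise $(B,1)\in T$. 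Multiplying now yields $(A,1)(B,1)=(A\cup B,1)\in T$, so $A\cup B\in{\bf Supp}(T)$, establishing closure under $\bullet$.

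I do not anticipate a substantive obstacle: both parts reduce to direct manipulations of the semidirect-product multiplication, with the only non-trivial input being the uniqueness of inverses in the ambient inverse semigroup $\mathcal{P}^*(G)\rtimes_\beta G$, needed to locate $(A,g_A)^{-1}$ inside $T$ and thereby extract the idempotent $(A,1)$.
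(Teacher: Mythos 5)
Your proof is correct and follows essentially the same route as the paper: part (i) uses that the identity of the monoid $T$ acts on $(A,g)$ to force $1\in A$, and part (ii) extracts the idempotents $(A,1)=(A,g_A)(g_A^{-1}A,g_A^{-1})$ and $(B,1)$ inside $T$ and multiplies them to get $(A\cup B,1)$. The only difference is that you explicitly verify that the identity of $T$ must be $(\{1\},1)$ and that $T$-inverses coincide with the ambient inverses in $\mathcal{P}^*(G)\rtimes_\beta G$, steps the paper leaves implicit.
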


\begin{proof}
(i) Let $A$ in ${\bf Supp}(T)$ and $g\in G$ be such that $(A,g)\in T.$ Then $1\in A$, because $(\{1\},1)\in T$ and $(A\cup \{1\}, g )=(\{1\},1)(A,g)=(A,g).$ 

(ii) Suppose that $T$ is an inverse semigroup and let $A,B\in {\bf Supp}(T).$   Let $g\in G$  be such that $(A,g)\in T$. Since $T$ is  an inverse semigroup, the idempotent $(A,1)=(A,g)(g^{-1}A, g^{-1})$ is in $T,$ in the same way we also have that $(B,1)\in T$. Thus $(A\cup B,1)=(A,1)(B,1)\in T$ and $A\cup B\in  {\bf Supp}(T).$
\end{proof}

The following result shows that an extension of Exel's correspondence  fails when $S=\widetilde{G}^R.$  

\begin{prop}
\label{nohom}
Let $G$ be an infinite  group and $T$ be a proper intermediate extension of $\widetilde{G}_R.$ 
There is no semigroup homomorphism   $\mathfrak{i} : T\to \widetilde{G}_R $ extending the  unital premorphism  
\begin{equation}
\label{iota}
 \iota: G\ni g\mapsto (\{1,g\}, g) \in \widetilde{G}_R.
 \end{equation}
\end{prop}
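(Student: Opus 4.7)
I would prove this by contradiction: suppose $\mathfrak{i}:T\to \widetilde{G}^R$ is a semigroup homomorphism extending $\iota$. The key witness will be an idempotent of $T\setminus \widetilde{G}^R$ with infinite support, which cannot be absorbed into a finite-support target without violating the constraint $\mathfrak{i}\circ\iota=\iota$.

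Since $T$ is a proper inverse extension of $\widetilde{G}^R$, pick $(A,g)\in T\setminus \widetilde{G}^R$. Because $T$ is inverse, both $(A,g)(A,g)^{-1}=(A,1)$ and $(A,g)^{-1}(A,g)=(g^{-1}A,1)$ lie in $T$, and Lemma \ref{latice}(i) then gives $1\in A$ and $1\in g^{-1}A$, i.e.\ $\{1,g\}\subseteq A$. Hence $A$ must be infinite, since otherwise $(A,g)$ would already lie in $\widetilde{G}^R$. Set $e:=(A,1)\in T$; this is the witness.

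Next I claim $\mathfrak{i}$ fixes $\widetilde{G}^R$ pointwise. Applying formula (\ref{what}) with $\theta=\iota$ yields the factorization
\[
(\{1,g_1,\ldots,g_n\},g_n)=\iota(g_1)\iota(g_1)^{-1}\cdots\iota(g_{n-1})\iota(g_{n-1})^{-1}\iota(g_n)
\]
inside $\widetilde{G}^R$ (the induced extension $\widetilde{\iota}$ is just the identity map of $\widetilde{G}^R$). Applying the homomorphism $\mathfrak{i}$ and using $\mathfrak{i}\circ\iota=\iota$ returns the same element, proving the claim.

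Now I derive the contradiction. The image $\mathfrak{i}(e)\in \widetilde{G}^R$ is idempotent, hence has the form $(C,1)$ with $C$ a \emph{finite} subset of $G$. For every $a\in A$, formula (\ref{order}) gives $(A,1)\leq (\{1,a\},1)$ in the natural partial order, and because semigroup homomorphisms between inverse semigroups preserve this order,
\[
(C,1)=\mathfrak{i}(e)\leq \mathfrak{i}((\{1,a\},1))=(\{1,a\},1),
\]
which by (\ref{order}) forces $a\in C$. Thus $A\subseteq C$, contradicting the infiniteness of $A$. The only nontrivial move is the observation in the second paragraph: Lemma \ref{latice}(i) compels any element of $T\setminus \widetilde{G}^R$ to produce an idempotent of infinite support, and this infinite support is precisely what cannot survive a homomorphism into the finite-support semigroup $\widetilde{G}^R$ once one insists on fixing the small idempotents $(\{1,a\},1)$.
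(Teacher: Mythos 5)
Your proof is correct and follows essentially the same route as the paper's: both arguments locate an element of $T$ with infinite support, use the fact that $\mathfrak{i}$ must fix the idempotents $(\{1,a\},1)=\iota(a)\iota(a)^{-1}$, and conclude that the necessarily finite support of the image would have to contain all of the infinite set $A$. Your write-up is merely more explicit in two spots the paper glosses over --- why a proper extension must contain an infinite-support element (which genuinely requires $T$ to be inverse, as you assume) and why $\mathfrak{i}$ fixes $\widetilde{G}^R$ pointwise --- but the core absorption/order argument is the same.
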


\begin{proof} Suppose not and let  $\mathfrak{i} : T\to \widetilde{G}_R $  be a  semigroup homomorphism with  $\mathfrak{i}(\{1,g\}, g)=(\{1,g\}, g), g\in G$. Since $\widetilde{G}_R \subsetneq  T$, there is an infinite set $A\subseteq G$ such that $(A,g)\in T$ for some $g\in G$. Then   $1\in A,$  thanks to  part (i) of Lemma \ref{latice}.  Write  $\mathfrak{i} (A,g)=(H,h)\in \widetilde{G}_R$  and take  $a\in A.$ Since $1\in A$, we have $(\{1,a\},1)(A,g)=(A,g)$  and  $(H,h)=\mathfrak{i}(\{1,a\},1)\mathfrak{i}(A,g)=(\{1,a\},1)(H,h)$, which implies $a\in H$. Thus $A$ is contained in the finite set  $H$, which is  a contradiction.
\end{proof}

 In view of  Proposition \ref{nohom}, one must impose some restriction on the semigroup $S$  in order to have that any  unital premorphism  $G\to S$ can be extended to a morphism $T\to S$ for any  intermediate extension $T$ of $\widetilde{G}_R.$ With this in mind, we introduce  the next.

\begin{defi}
\label{meetc}
{\rm Let   $\theta: G\to S$ be a unital premorphism and $\mathcal{L}$ be a subsemilattice of  $\mathcal{P}^*(G)$ we say that  $E(S)$ is $(\mathcal{L}, \theta)$-meet complete,  if for any  $A\in \mathcal{L}$  the family of idempotents $\{\theta(a)\theta(a)^{-1}: a\in A\}\subseteq S$  has a meet  (i.e. a greatest lower bound) with respect to the natural partial order.}
\end{defi}

\begin{ex}
\label{ex-meetcomplete}
Let $G$ be a group and $X$ be  a topological space. Then
\begin{itemize}
\item  The semilattice $E( \widetilde{G}_R)$ is   $(\finitosg, \iota)$-meet complete but not    $(\mathcal{P}^*(G), \iota)$-meet complete provided that $G$ is infinite, where $\iota$ is given by \eqref{iota}. %\textcolor{blue}{En general, si $\theta:G\rightarrow S$ es un premorfismo unitario y $\mathcal{L}$ es subsemirretículo de $\finitosg$ \footnote{\color{red} algun ejemplo interesante de eso?}, entonces $E(S)$ es $(\mathcal{L},\theta)$-meet complete.}

\item  Let $\theta: G\to I(X)$ be a partial action and $\mathcal{L}$ be a subsemilattice of  $\mathcal{P}^*(G),$ then  $E(I(X))$ is $(\mathcal{L}, \theta)$-meet complete, because it is   meet complete.

\item Let $G$ be a topological group, $X$ be a locally compact space,   $S\su\Gamma(X)$ be a $\tau_{hco}$-closed inverse subsemigroup  and  $\theta: G\to S$ be  a continuous partial action. It follows from Proposition \ref{ext-to-hyp} that $E(S)$  is  $(K(G), \theta)$-meet complete.
\end{itemize}
\end{ex}
Clearly if  $E(S)$ is $(\mathcal{L}, \theta)$-meet complete, then it is $(\mathcal{L'}, \theta)$-meet complete, for any $\mathcal{L'}$ subsemilattice of $\mathcal{L}.$ Moreover, if either $S$   or $G$ is finite, then $E(S)$ is $(\mathcal{L}, \theta)$-meet complete. 

 Observe that the meet mentioned in the Definition \ref{meetc} is, in principle,  an element of the semigroup $S$. However, as $E(S)$ is an ideal (respect to the natural order), then that meet is actually an idempotent. This justify the definition of the function $\mathcal{I}$ in the following lemma,  which is crucial for the main result of this section.

\begin{lema}
\label{lemainf}
Let $G$ be a group, $S$ be an inverse semigroup, $\theta: G\to S$ be a unital premorphism and $\mathcal{L}$ be a subsemilattice of  $\mathcal{P}^*(G)$ such that  $E(S)$ is $(\mathcal{L}, \theta)$-meet complete. Let $\mathcal{I}: \mathcal{L}\to E(S)$ be defined by
\begin{equation*}
 \label{infi} 
 \mathcal{I}_A = \bigwedge\limits_{a\in A}\{\theta(a)\theta(a)^{-1}: a\in A\},
\end{equation*}
for $A\in \mathcal{L}$. The following assertions hold
\begin{itemize}
\item [(i)] The map  $\mathcal{I}$ is a semigroup homomorphism, in particular  $\mathcal{I} _A  \mathcal{I} _{\{g\}}=\mathcal{I} _A$ for any $A\in \mathcal{L}$ and $g\in A.$ 

\item [(ii)]  Let  $\{A_i\}_{i\in I}$ be a family  in $\mathcal{L}$ such that $\bigcup\limits_{i\in I}A_i  \in \mathcal{L},$ then   the meet $\bigwedge\limits_{i\in I}\mathcal{I}_{A_i}$ exists and equals   $\mathcal{I}_{ \bigcup\limits_{i\in I}A_i }$.

\item [(iii)] $\theta(g)\mathcal{I}_B=\mathcal{I}_{gB}\theta(g)$, for all $g\in G$ and $B\in \mathcal{L}.$

\end{itemize}
\end{lema}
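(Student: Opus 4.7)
The plan is to treat (i), (ii), (iii) in sequence, each step using the previous.

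For \textbf{(i)}, I will just use that meet in a semilattice respects unions of index sets: the meet of $\{\theta(c)\theta(c)^{-1}:c\in A\cup B\}$ equals the meet of its two halves, so $\mathcal{I}_{A\cup B}=\mathcal{I}_A\wedge\mathcal{I}_B=\mathcal{I}_A\mathcal{I}_B$. Thus $\mathcal{I}:(\mathcal{L},\cup)\to(E(S),\cdot)$ is a semigroup homomorphism; applied with $B=\{g\}\subseteq A$, this gives $\mathcal{I}_A\mathcal{I}_{\{g\}}=\mathcal{I}_{A\cup\{g\}}=\mathcal{I}_A$. For \textbf{(ii)}, I will check the universal property of the meet directly: since $A_j\subseteq\bigcup_i A_i$, the element $\mathcal{I}_{\bigcup_i A_i}$ is a lower bound of each $\mathcal{I}_{A_j}$; conversely, any idempotent $f$ with $f\le\mathcal{I}_{A_i}$ for every $i$ satisfies $f\le\theta(a)\theta(a)^{-1}$ for every $a\in\bigcup_i A_i$, hence $f\le\mathcal{I}_{\bigcup_i A_i}$.

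For \textbf{(iii)}, the first step is the pointwise ``shift'' identity
\[
\theta(g)\,\theta(a)\theta(a)^{-1} \;=\; \theta(ga)\theta(ga)^{-1}\,\theta(g), \qquad g,a\in G.
\]
The equality $\theta(g)\theta(a)\theta(a)^{-1}=\theta(ga)\theta(a)^{-1}$ is the dual form of the premorphism axiom recalled in the preliminaries, while $\theta(ga)\theta(a)^{-1}=\theta(ga)\theta(ga)^{-1}\theta(g)$ follows by applying the first premorphism axiom to the pair $(ga,a^{-1})$ and then multiplying by $\theta(ga)$ on the left. Writing $p=\theta(g)\theta(g)^{-1}$, the next step is to prove $\theta(g)\mathcal{I}_B\theta(g)^{-1}=\mathcal{I}_{gB}\,p$ by showing both sides equal the meet of the family $\{\theta(ga)\theta(ga)^{-1}\,p:a\in B\}$. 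For the right-hand side this is the distributive identity $(\bigwedge e_i)\wedge p=\bigwedge(e_i\wedge p)$, valid in any semilattice when the meet exists. For the left-hand side, order-compatibility of multiplication combined with the shift identity yields the lower-bound direction, while the maximality step is the technical core: given an idempotent $f$ below each $\theta(ga)\theta(ga)^{-1}\,p$, one first deduces $f\le p$, then uses the shift identity to compute
\[
\theta(g)^{-1}f\theta(g)\cdot\theta(a)\theta(a)^{-1}\;=\;\theta(g)^{-1}f\theta(g),
\]
whence $\theta(g)^{-1}f\theta(g)\le\mathcal{I}_B$; conjugating back and using $pfp=f$ gives $f\le\theta(g)\mathcal{I}_B\theta(g)^{-1}$.

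With $\theta(g)\mathcal{I}_B\theta(g)^{-1}=\mathcal{I}_{gB}\,p$ in hand, I will finish by observing that both $\theta(g)\mathcal{I}_B$ and $\mathcal{I}_{gB}\theta(g)$ lie below $\theta(g)$ in the natural partial order and that a direct computation shows each has range idempotent equal to $\mathcal{I}_{gB}\,p$. Since any two elements of an inverse semigroup that lie below a common element and share the same range idempotent must coincide, the identity $\theta(g)\mathcal{I}_B=\mathcal{I}_{gB}\theta(g)$ follows. The principal obstacle is the maximality step just sketched: conjugation by $\theta(g)$ is not invertible because $\mathcal{I}_B$ need not lie below $\theta(g)^{-1}\theta(g)$, so the argument must route through the range idempotent $p$ to transport information between $B$ and $gB$.
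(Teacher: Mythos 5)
Your proposal is correct, and for part (iii) it reaches the paper's conclusion by a noticeably different route. Both arguments hinge on the same pointwise shift identity $\theta(g)\theta(a)\theta(a)^{-1}=\theta(ga)\theta(ga)^{-1}\theta(g)$, which you derive carefully from the two premorphism identities exactly as one should. The difference is in how $\theta(g)$ is moved past the infinite meet: the paper simply writes $\theta(g)\bigwedge_{b\in B}\theta(b)\theta(b)^{-1}=\bigwedge_{b\in B}\theta(g)\theta(b)\theta(b)^{-1}$ and $\bigwedge_{b\in B}\theta(gb)\theta(gb)^{-1}\theta(g)=\bigl(\bigwedge_{b\in B}\theta(gb)\theta(gb)^{-1}\bigr)\theta(g)$, silently invoking the standard fact that multiplication in an inverse semigroup distributes over all existing meets (this is in Lawson's book and is proved by essentially the $tt^{-1}\leq ss^{-1}$ trick you use). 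You instead avoid citing that fact and reprove the special case you need by hand: you show $\theta(g)\mathcal{I}_B\theta(g)^{-1}$ and $\mathcal{I}_{gB}\,p$ are each the meet of $\{\theta(ga)\theta(ga)^{-1}p:a\in B\}$ via the universal property (your maximality step, passing through $f\leq p$, conjugating by $\theta(g)^{-1}$, and using $pfp=f$, is correct), and then you recover the asserted identity from the uniqueness of an element below $\theta(g)$ with prescribed range idempotent. Your version is longer but completely self-contained and makes explicit exactly where meet-completeness and the idempotent calculus enter; the paper's version is a one-line computation whose legitimacy rests on an unstated general lemma. Note that both proofs share the same small gap inherited from the statement itself: one must know that $\mathcal{I}_{gB}$ is defined, i.e.\ that the meet over $gB$ exists, which is guaranteed only if $gB$ (or a set generating the same family of idempotents) lies in $\mathcal{L}$; in the applications ($\mathcal{L}={\bf Supp}(T)$ with $T$ an inverse intermediate extension) this is harmless, but it is worth flagging. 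Parts (i) and (ii) of your argument coincide with what the paper dismisses as straightforward.
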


\begin{proof}
As we already observed,  the map $\mathcal{I}$ is well defined. Items $(i)$ and $(ii)$ are straightforward. To prove (iii), take $g\in G$ and $B\in\mathcal{L}.$ Since $\theta$ is a premorphism, $\theta(g)\theta(b)\theta(b^{-1})=\theta(gb)\theta(gb)^{-1}\theta(g)$. Therefore
\begin{align*}
\theta(g)\mathcal{I}_B&=\theta(g)\left(\bigwedge\limits_{b\in B}\theta(b)\theta(b)^{-1}\right)=\bigwedge\limits_{b\in B} \theta(g)\theta(b)\theta(b^{-1})=\left(\bigwedge\limits_{b\in B}\theta(gb)\theta(gb)^{-1}\right) \theta(g)=\mathcal{I}_{gB}\theta(g).  
\end{align*}
\end{proof}

Recall that  a homomorphism $\phi:T\to S $ between inverse semigroups is said to be  {\em meet-preserving}
if for every subset $A \subseteq S$ such that $\bigwedge A$ exists in $T$, then $\bigwedge \phi (A)$ exists in $S$ and
$ \phi (\bigwedge A)=\bigwedge \phi (A).$

Now we present an extension of Exel's correspondence.

\begin{teo}
\label{Text}
Let $\theta:G\to S$ be a unital premorphism between a group $G$ and an  inverse semigroup  $S.$ Consider $T$  an intermediate extension of  $ \widetilde{G}_R$  and  suppose that $T$ is inverse. Then the following statements are equivalent.

\begin{itemize}
\item[(i)]  The semilattice $E(S)$ is $({\bf Supp}(T), \theta)$-meet complete. 
\item [(ii)] $\theta$ has a meet-preserving extension $\theta^*:T \to S.$ 
\end{itemize}
Moreover, suppose that any of the above statements holds. 
Then $\kappa\leq \theta^*,$  for any $\kappa:T\to  S$    homomorphism  extending $\theta,$ and $\kappa=\theta^*$ if and only if $\kappa$ is meet-preserving.

\end{teo}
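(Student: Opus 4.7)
The strategy is to exhibit the extension $\theta^{*}:T\to S$ explicitly by the formula $\theta^{*}(A,g):=\mathcal{I}_{A}\theta(g)$ and to derive everything from a single structural observation about intermediate extensions: because $T$ is an inverse monoid containing $\widetilde{G}^R$, Lemma \ref{latice}(i) forces $1\in A$ for every $(A,g)\in T$, and applying the same lemma to $(A,g)^{-1}=(g^{-1}A,g^{-1})$ forces $g\in A$ as well. Thus every element of $T$ has the form $(A,g)$ with $\{1,g\}\su A$ and admits the factorization $(A,g)=(A,1)(\{1,g\},g)$ in $T$; this identity drives nearly every subsequent step.

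For $(i)\Rightarrow(ii)$: the meet-completeness hypothesis makes $\theta^{*}$ well-defined, and $\theta^{*}(\{1,g\},g)=(1\wedge\theta(g)\theta(g)^{-1})\theta(g)=\theta(g)$ gives the extension property. Multiplicativity reduces, via Lemma \ref{lemainf}(i) and (iii), to checking $\mathcal{I}_{A\cup gB}\theta(g)\theta(h)=\mathcal{I}_{A\cup gB}\theta(gh)$; since $1\in B$ gives $g\in A\cup gB$, we have $\mathcal{I}_{A\cup gB}\leq\theta(g)\theta(g)^{-1}$, and the premorphism identity $\theta(g)\theta(h)=\theta(g)\theta(g)^{-1}\theta(gh)$ closes the computation. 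For meet-preservation of $\theta^{*}$, if $(A_{*},g)=\bigwedge_{\alpha}(A_{\alpha},g)$ exists in $T$, passing to the $(\cdot,1)$-parts shows $(A_{*},1)=\bigwedge_{\alpha}(A_{\alpha},1)$ (any idempotent lower bound $(B,1)\in T$ yields $(B,g)=(B,1)(\{1,g\},g)\in T$, itself a lower bound of $(A_{\alpha},g)$, hence $B\supseteq A_{*}$). This forces $A_{*}=\bigcup_{\alpha}A_{\alpha}\in{\bf Supp}(T)$, Lemma \ref{lemainf}(ii) gives $\bigwedge_{\alpha}\mathcal{I}_{A_{\alpha}}=\mathcal{I}_{A_{*}}$, and right-multiplication by $\theta(g)$---which preserves existing meets in an inverse semigroup---finishes the verification.

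For $(ii)\Rightarrow(i)$: given $A\in{\bf Supp}(T)$, the family $\{(\{1,a\},1)\}_{a\in A}\su\widetilde{G}^R\su T$ has meet $(A,1)$ in $T$, since any $(B,1)\in T$ dominating each $(\{1,a\},1)$ satisfies $B\supseteq A$. Because any homomorphism extending $\theta$ must coincide on $\widetilde{G}^R$ with Exel's formula \eqref{what}, $\theta^{*}(\{1,a\},1)=\theta(a)\theta(a)^{-1}$, and meet-preservation of $\theta^{*}$ yields $\theta^{*}(A,1)=\bigwedge_{a\in A}\theta(a)\theta(a)^{-1}$ in $S$, establishing the required meet-completeness.

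For the ``moreover'' claim, let $\kappa:T\to S$ be any homomorphism extending $\theta$. The factorization $(A,g)=(A,1)(\{1,g\},g)$ gives $\kappa(A,g)=\kappa(A,1)\theta(g)$; order-preservation of $\kappa$ combined with $(A,1)\leq(\{1,a\},1)$ for each $a\in A$ yields $\kappa(A,1)\leq\theta(a)\theta(a)^{-1}$ for every $a\in A$, whence $\kappa(A,1)\leq\mathcal{I}_A$ and therefore $\kappa(A,g)\leq\theta^{*}(A,g)$. The equivalence $\kappa=\theta^{*}\Leftrightarrow\kappa\text{ is meet-preserving}$ then follows: one direction is immediate from $\theta^{*}$ being meet-preserving by $(i)\Rightarrow(ii)$, and the other because meet-preservation applied to $(A,1)=\bigwedge_{a\in A}(\{1,a\},1)$ forces $\kappa(A,1)=\mathcal{I}_A$, collapsing $\kappa$ to $\theta^{*}$ everywhere. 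I expect the main delicate step to be the meet-preservation of $\theta^{*}$---specifically, arguing that the first coordinate of a meet in $T$ really coincides with the union of the $A_{\alpha}$'s, so that Lemma \ref{lemainf}(ii) may be invoked.
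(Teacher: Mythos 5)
Your proof follows the paper's argument in every essential respect: the same explicit formula $\theta^*(A,g)=\mathcal{I}_A\theta(g)$, the same appeal to Lemma \ref{latice} and to items (i)--(iii) of Lemma \ref{lemainf} for multiplicativity and meet-preservation, the same decomposition $(\{1,a\},1)=(\{1,a\},a)(\{1,a^{-1}\},a^{-1})$ for the converse implication, and the same factorization $(A,g)=(A,1)(\{1,g\},g)$ for showing $\kappa\leq\theta^*$. The one step you rightly single out as delicate --- that the first coordinate of $\bigwedge_{\alpha}(A_\alpha,g)$ taken in $T$ equals $\bigcup_\alpha A_\alpha$ --- is asserted without argument in the paper (via \eqref{order}) and your parenthetical does not actually close it either: knowing that $(A_*,1)$ is the meet of the $(A_\alpha,1)$ in $T$ only yields $\bigcup_\alpha A_\alpha\subseteq A_*$, and equality can fail when $\bigcup_\alpha A_\alpha\notin{\bf Supp}(T)$ (the meet in $T$ then sits at the smallest member of ${\bf Supp}(T)$ above the union, if one exists), so on this point you are exactly as complete as the source.
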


\begin{proof}
 $(i) \Rightarrow (ii)$ Suppose $E(S)$ is $({\bf Supp}(T), \theta)$-meet complete. Let
\begin{equation}\label{test}\theta^*: T\ni (A,g)\mapsto \mathcal{I}_A\theta(g)\in S.\end{equation}  We have $\theta^*(\{1\},1)=\theta(1)=1$ and $\theta^*(\{1,g\},g)=\theta(g),$ that is $\theta^*$ extends $\theta.$ To check that $\theta^*$ is a homomorphism take  $(A,g)$ and $(B,h)$ in $T.$ As observed in the proof of Proposition \ref{nohom}, one has that $1\in B$ which gives $g\in gB.$ Therefore by items (i) and (iii) of Lemma \ref{lemainf} we obtain
\begin{align*}
		\theta^*(A,g)\theta^*(B,h)&=
\mathcal{I}_A\mathcal{I}_{gB}\theta(g)\theta(h)=\mathcal{I}_A\mathcal{I}_{gB}\theta(g)\theta(g)^{-1}\theta(gh)=\mathcal{I}_A\mathcal{I}_{gB}\theta(gh)=\mathcal{I}_{A\cup gB}\theta(gh),
	\end{align*}
and $\theta^*(A,g)\theta^*(B,h)=\theta^*((A,g)(B,h)).$ To show that $\theta^*$ is meet-preserving consider a family $\{(A_i, g_i)\}_{i\in I}\subseteq T$ such that $\bigwedge\limits_{i\in I}(A_i,g_i)$ exists in  $T.$ By \eqref{order}, there exists $g\in G$ such that $g_i=g$, for all $i\in I$ and $\bigwedge\limits_{i\in I}(A_i,g_i)=\left(\bigcup\limits_{i\in I}A_i,g\right).$  Thus  $\{A_i\}_{i\in I}\subseteq {\bf Supp}(T)$ satisfies the assumptions of $(ii)$ in  Lemma  \ref{lemainf}, then 
$$
\theta^*\left(\bigwedge\limits_{i\in I}(A_i,g_i)\right)=\mathcal{I}_{\bigcup\limits_{i\in I}A_i}\theta(g)=\bigwedge\limits_{i\in I}\mathcal{I}_{A_i}\theta(g)=\bigwedge\limits_{i\in I}\theta^*(A_i, g).
$$

\medskip

\noindent $(ii) \Rightarrow (i) $  Let  $\theta^*: T\to S$ be a meet-preserving extension of $\theta$ and take $A\in {\bf Supp}(T),$ by $(i)$ in Lemma \ref{latice} we know that $(A,1)\in T$. Since $(A,1)=\bigwedge\limits_{a\in A}\{(\{1,a\}, 1)\}$ and $(\{1,a\}, 1)=(\{1,a\}, a)(\{1,a^{-1}\}, a^{-1}),$  we  have
\begin{equation}
\theta^*(A,1)=\bigwedge\limits_{a\in A}\{\theta^*(\{1,a\}, 1)\}=\bigwedge\limits_{a\in A}\{\theta(a)\theta(a)^{-1}\}=\mathcal{I}_A.
\end{equation}
Thus $E(S)$ is $({\bf Supp}(T), \theta)$-meet complete.

	Let  $\kappa:T\to  S$ be a homomorphism extending $\theta$ and let $\theta^*$ be defined by \eqref{test}. Take $(A,g)\in T,$  we know that $(A,1)\in T$,  since  $T$ is  inverse.  On the other hand, for $a\in A$  the equality $(\{1,a\},a)(\{1,a^{-1}\},a^{-1})(A,1)=(A,1)$ gives $\kappa(A,1)=\theta(a)\theta(a)^{-1}\kappa(A,1)$ therefore $\kappa(A,1)\leq \theta(a)\theta(a)^{-1}$ for all $a\in A$. Hence, $\kappa(A,1)\leq \mathcal{I}_A$. Since the natural partial order is compatible with  product,  one gets
\begin{center}
$\kappa(A,g)=\kappa(A,1)\kappa(\{1,g\},g)\leq\mathcal{I}_A\kappa(\iota(g))=\mathcal{I}_A\theta(g)=\theta^*(A,g),$
\end{center}
and $\kappa\leq \theta^*.$ To check the last assertion, suppose that $\kappa$ is a meet-preserving homomorphism, again the equality  $(A,1)=\bigwedge\limits_{a\in A}\{(\{1,a\}, 1)\}$ gives  $\kappa(A,1)=\mathcal{I}_A$
which implies $\kappa(A,g)=\theta^*(A,g),$ as desired.
\end{proof}

\begin{rem}
{\rm Taking $T=\widetilde{G}_R$ in  Theorem \ref{Text} and using the first item of Example \ref{ex-meetcomplete} we recover Exel's correspondence presented at the beginning of the section.}
\end{rem}

We give another application of Theorem \ref{Text}.

\begin{ex}\label{exten}
 {\rm Let $G$ be a topological group  and   $\lambda_g$ be the left multiplication by $g\in G.$ It is not difficult to show that 
 \begin{equation}
 \label{lambdag}\mathfrak{l}: G\ni g\rightarrow \lambda_g|_{\{1,g^{-1}\}^c}\in  \Gamma(G\setminus\{1\})
 \end{equation}

is a unital premorphism. For a subset  $A$ of $G$  with $1\in A$ we have  $\bigcap\limits_{g\in A}\{1,g\}^{c}=A^c$ and $\mathcal{I}_A={\rm id}_{A^c}$. Thus $E(\Gamma(G\setminus\{1\}))$ is  $(CL_1(G),\Lambda)$-meet complete,  where $CL_1(G)$ is the semilattice of closed subsets of $G$ containing $1$. Let $T=\{(A,g):\; A\in CL(G),\, \{1,g\}\subseteq A\},$   by Theorem \ref{Text} the map $\mathfrak{l}^*: T\ni (A,g)\mapsto {\rm id}_{A^c}\circ\mathfrak{l}(g)\in  \Gamma(G\setminus\{1\})$  is the only  meet-preserving monoid homomorphism extending $\mathfrak{l}.$ }
\end{ex}

\subsection{A topological correspondence}

In this subsection we present  Exel's correspondence from a topological point of view, that is to say, we assume that $G$ and $S$ are topological spaces and $\theta$ is continuous. 

When $G$ is  a topological group, a natural way of endowing  $\widetilde{G}^R$ with a topology is seeing  $\finitosg$ as a subspace of the hyperspace $K(G)$ with the Vietoris topology.   This was  analyzed by K. Choi  in \cite{Choi2013}, he defined an intermediate extension of  $\widetilde{G}^R$ as follows:
\[
\widetilde{G}_c^R=\{(A,g)\in K(G)\times G:\; \;\{1,g\}\subseteq A\}.
\]
The following is a consequence of \cite[Proposition 2.6]{Choi2013}. 
 
\begin{prop}
\label{top-choi}
Let $G$ be a topological group. Then  $\widetilde{G}_c^R$ is a topological inverse monoid with the topology inherited from $K(G)\times G$.
\end{prop}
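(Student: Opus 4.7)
The plan is to view the semigroup structure on $\widetilde{G}_c^R$ as inherited from the ambient set $K(G) \times G$ with operation $(A,g)(B,h) = (A\cup gB, gh)$, and then verify both algebraic closure and joint continuity. First I would check algebraic closure: given $(A,g),(B,h)\in \widetilde{G}_c^R$, the set $gB$ is compact because $x\mapsto gx$ is a homeomorphism of $G$, so $A\cup gB\in K(G)$; moreover $1\in A$ and $gh\in gB$ give $\{1,gh\}\subseteq A\cup gB$. The same verifications show $(g^{-1}A,g^{-1})\in \widetilde{G}_c^R$, and $(\{1\},1)$ is the unit. The inverse semigroup identities themselves have already been recorded algebraically earlier in the section.

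For continuity, the plan is to factor both multiplication and inversion through continuous building blocks: the union map $\cup\colon K(G)\times K(G)\to K(G)$ (continuous by the preliminaries, where $K(G)$ is a topological semigroup under union), multiplication and inversion in $G$ (continuous since $G$ is a topological group), and the ``translation map'' $\mu\colon G\times K(G)\to K(G)$, $\mu(g,B)=gB$. Granted continuity of $\mu$, multiplication factors as $((A,g),(B,h))\mapsto (A,\mu(g,B),gh)\mapsto (A\cup \mu(g,B),gh)$, and inversion factors as $(A,g)\mapsto (g^{-1},A,g^{-1})\mapsto (\mu(g^{-1},A),g^{-1})$; both compositions are then continuous as compositions of continuous maps.

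The main work is thus in checking continuity of $\mu$, which I would verify on the Vietoris subbasis $\{U^-,U^+:U\subseteq G \text{ open}\}$. For $U^-$: if $gB\cap U\neq \emptyset$, pick $b\in B$ with $gb\in U$ and use joint continuity of the group multiplication to produce neighborhoods $V$ of $g$ and $W$ of $b$ with $VW\subseteq U$; then $V\times W^-$ is an open neighborhood of $(g,B)$ in $G\times K(G)$ whose image lies in $U^-$. For $U^+$: if $gB\subseteq U$, for each $b\in B$ produce neighborhoods $V_b,W_b$ with $V_bW_b\subseteq U$, then use compactness of $B$ to extract a finite subcover $W_{b_1},\ldots,W_{b_n}$; setting $V=\bigcap_{i=1}^n V_{b_i}$ and $W=W_{b_1}\cup\cdots\cup W_{b_n}$, the product $V\times W^+$ is an open neighborhood of $(g,B)$ with $\mu(V\times W^+)\subseteq U^+$.

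The main obstacle is the $U^+$ case in the continuity of $\mu$, which uses compactness of $B$ in an essential way; this is precisely what justifies enlarging $\mathcal{P}_{\rm fin}(G)$ to $K(G)$ in the definition of $\widetilde{G}_c^R$. Everything else is routine bookkeeping, and by invoking \cite[Proposition 2.6]{Choi2013} this entire continuity argument can be cited in a single line.
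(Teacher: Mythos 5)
Your argument is correct, but it is worth noting that the paper does not actually prove this proposition: it records it as a direct consequence of \cite[Proposition 2.6]{Choi2013} and moves on. What you have written is a complete, self-contained verification of exactly the statement being cited. Your decomposition is the natural one and all the pieces check out: algebraic closure of $\widetilde{G}_c^R$ under the product $(A,g)(B,h)=(A\cup gB,gh)$ and the inversion $(A,g)\mapsto(g^{-1}A,g^{-1})$ is routine (and, since $\widetilde{G}_c^R$ is a subsemigroup of the inverse semigroup $\mathcal{P}^*(G)\rtimes_\beta G$ closed under inversion, the inverse-semigroup identities are inherited); the continuity of multiplication and inversion then reduces, as you say, to the continuity of the union map on $K(G)$ (already noted in the paper's preliminaries), of the group operations, and of the translation map $\mu(g,B)=gB$. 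Your treatment of $\mu$ on the Vietoris subbasis is the genuine content: the $U^-$ case uses only joint continuity of multiplication, and the $U^+$ case uses compactness of $B$ to extract a finite subcover and intersect finitely many neighbourhoods of $g$ — this is precisely where the restriction from $\mathcal{P}^*(G)$ to $K(G)$ earns its keep, as you observe. The trade-off between the two routes is the usual one: the citation keeps the paper short, while your argument makes the proposition independent of \cite{Choi2013} and exposes exactly which topological hypotheses (Vietoris topology on compact sets, joint continuity in $G$) are being used. Either is acceptable; yours has no gaps.
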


The topological version of Exel's correspondence we are about to present is given  for topological inverse semigroups whose set of idempotents is a lattice of special type.  A {\it topological semilattice}  is a Hausdorff topological space $E$ endowed with a partial order  such that any two elements have a greatest lower bound and the map $E\times E\ni (x,y)\mapsto x\wedge y\in E$ is continuous. A topological semilattice $E$ has {\it small semilattices}, if   every point has a basis of neighbourhoods  which are subsemilattices of $E$ (see \cite{CCL,Law1}).

%We say that a base $\mathcal{B}$  for a topological inverse semigroup $S$  is {\em idempotent-closed} if $E(S)\cap V$ is closed under  product for all $V\in \mathcal{B}$. 
\medskip

\begin{ex}
\label{ex-non}
{\rm It is clear that if $S$ is a topological inverse semigroup then $E(S)$ with the natural partial order is a topological semilattice. Moreover,  for each of the following topological inverse semigroups, its set of idempotents admits small semilattices. %semigroups admit idempotent-closed basis.

\begin{itemize}
\item[(i)]   $(\Gamma(X), \tau_{hco})$ where  $X$ is a locally compact Hausdorff space.  

\item[(ii)] $\widetilde{G}^R$  and $\widetilde{G}_c^R$ for every topological group  $G$. 

\end{itemize}}

\end{ex}

\begin{rem}{\rm %Let $S$ be a topological  Hausdorff inverse semigroup, then $E(S)$ is a topological semilattice. Thus, if $S$ is idempotent-closed, $E(S)$ has small semilattices, (see \cite[p. 209]{Law}). 
Topological semilattices having small semilattices are also known  as Lawson semilattices. For a concrete example of a non Lawson semilattice, the interested reader may consult  \cite[Example 2]{La} and \cite[Theorem 4.5, pag. 296]{CCL}. }
\end{rem}

%In the next result we show how to obtain semigroups with idempotent-closed basis from Lawson semilattices.
%\begin{prop}
%	Let $S$ be a locally compact semilattice, $G$ be a topological group and $\alpha: G\times S\rightarrow S$ be an action of $G$ on $S$ by  endomorphisms. Then $S$ has small-semilattices if and only if  the semidirect product $S\ltimes_{\alpha}G$ has a idempotent-closed basis.
%\end{prop}
%
%
%
%\begin{proof}
%	Suppose that  $S$ tiene small-semilattices, and  let $\mathcal{U}$ be a  base of  subsemilattices of $S$. Further, take  $\mathcal{V}$ a basis  $G$
% then  $\mathcal{B}=\{U\times V\subseteq S\ltimes_{\alpha}G: U\in\mathcal{U}\ y\ V\in\mathcal{V}\}$ is a basis of  $S\ltimes_{\alpha}G$. Moreover, for  $U\times V\in \mathcal{B}$ and  $(s,1), (t,1)\in E(S\ltimes_{\alpha}G)\cap  (U\times V)$  we see that $(s,1)(t,1)=(st,1)\in U\times V$ because $st\in U$, this shows that $\mathcal{B}$ is an idempotent-closed basis for $S\ltimes_{\alpha}G$. Conversely suppose that  $S\times_{\alpha}G$  has an idempotent-closed basis, then  $E(S\ltimes_{\alpha}G)$ has  small-semilattices. But  $E(S\ltimes_{\alpha}G)=S\times\{1\}$ then  $S$ has small-semilattices.
%\end{proof}

The following lemma is crucial for the proof of the main result of this section. 
\begin{lema} 
\label{smsl}
If $E$ is a topological semilattice with small-semilattices, then 
\begin{equation}
\label{pic}
\pi: \mathcal P_{\rm fin}(E) \ni  \{x_1,\cdots,x_n\}\mapsto x_1\cdots x_n\in E
\end{equation}
is continuous, where $\mathcal P_{\rm fin}(E)$ has the Vietoris topology.
\end{lema}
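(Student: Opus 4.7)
The plan is to verify continuity of $\pi$ at an arbitrary point $A=\{a_1,\dots,a_k\}\in \mathcal{P}_{\rm fin}(E)$ (with the $a_i$ pairwise distinct). Fix an open neighbourhood $V$ of $\pi(A)=a_1\wedge\cdots\wedge a_k$. I will produce a basic Vietoris open set $N(U_1,\dots,U_k)$ containing $A$ whose $\pi$-image lies in $V$.

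First, the small-semilattices hypothesis applied at $\pi(A)$ yields an open subsemilattice $W$ with $\pi(A)\in W\subseteq V$. Iterated continuity of the meet operation $\wedge\colon E\times E\to E$ then supplies open sets $O_i\ni a_i$ (for $i=1,\dots,k$) such that $y_1\wedge\cdots\wedge y_k\in W$ whenever $y_i\in O_i$ for each $i$. Now apply the small-semilattices hypothesis a second time, this time at each $a_i$, to refine $O_i$ to an open subsemilattice $U_i$ with $a_i\in U_i\subseteq O_i$. Since $a_i\in U_i$, the set $N(U_1,\dots,U_k)$ is a Vietoris-open neighbourhood of $A$.

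It only remains to check that $\pi(N(U_1,\dots,U_k))\subseteq V$. Take $F=\{y_1,\dots,y_m\}\in N(U_1,\dots,U_k)$ and set $F_i=F\cap U_i$, which is nonempty by definition of $N(U_1,\dots,U_k)$. Because $U_i$ is a subsemilattice, the meet $b_i:=\bigwedge F_i$ lies in $U_i\subseteq O_i$. Since $F=F_1\cup\cdots\cup F_k$ and $\wedge$ is commutative, associative and idempotent, I can compute $\pi(F)=\bigwedge F=b_1\wedge\cdots\wedge b_k$, which lies in $W\subseteq V$ by the choice of the $O_i$'s.

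The main subtlety I anticipate lies in this last step: an element $F\in N(U_1,\dots,U_k)$ can have many more than $k$ points, so a priori $\pi(F)$ is a meet of arbitrarily many factors, and the joint continuity of $\wedge$ on $E^k$ alone is not enough to control it. The small-semilattices property is precisely what allows collapsing, inside each $U_i$, all of $F\cap U_i$ to a single representative $b_i\in U_i$, thereby reducing to a $k$-fold meet to which the previously arranged continuity estimate applies.
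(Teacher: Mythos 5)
Your proof is correct, but it uses the small-semilattices hypothesis in a different place than the paper does. The paper fixes a basis $\mathcal{B}$ of open \emph{subsemilattices} and, for a basic neighbourhood $V\in\mathcal{B}$ of $\pi(A)$, takes the $U_i$'s to be arbitrary open sets supplied by continuity of the $n$-fold meet $\alpha$; the surplus points of a set $B\in N(U_1,\dots,U_n)$ with $|B|=m>n$ are then absorbed by idempotency, rewriting $\pi(B)$ as a product of several $n$-tuples drawn from $U_1\times\cdots\times U_n$, each of which $\alpha$ sends into $V$, and finally invoking closure of $V$ under meets. You instead put the subsemilattice structure on the \emph{source} side: you shrink each $U_i$ to an open subsemilattice around $a_i$, collapse $F\cap U_i$ to a single representative $b_i=\bigwedge(F\cap U_i)\in U_i$, and then apply the $k$-fold continuity estimate once. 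Your route handles sets $F$ of arbitrary cardinality in one uniform step, avoiding the induction that the paper only sketches (``the general pattern''), at the cost of invoking small semilattices at each $a_i$ rather than only at $\pi(A)$ --- harmless under the stated global hypothesis. One small remark: your first application of the hypothesis, producing the subsemilattice $W$ with $\pi(A)\in W\subseteq V$, is never actually used (the final meet $b_1\wedge\cdots\wedge b_k$ lands in $W$ purely by the choice of the $O_i$'s, and $W$ being a subsemilattice plays no role), so you could work with $V$ directly there.
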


\begin{proof}
Let  $\mathcal{B}$ be a basis of subsemilattices for $E$. Let $V\in\mathcal{B}$ and  $A=\{x_1,\cdots, x_n\}\in \mathcal{P}_{\rm fin}(E)$ be such that $\pi(A)\in V$.  
	Let  $\alpha:E^n\to E$ given by $\alpha(f_1,\cdots, f_n)=f_1\cdots f_n$. As $\alpha$ is continuous, there are open sets $U_1, \cdots, U_n$ in $X$ such that  $x_i\in U_i$ and $\alpha(U_1\times\cdots\times U_n)\su V$.
	Consider the following open set in $\mathcal{P}_{\rm fin}(E)$
	\[
	M=N(U_1, \cdots, U_n)\cap \mathcal{P}_{\rm fin}(E).
	\]
	Clearly $A\in M$ and we claim that $\varphi(M)\su V$. Indeed, let $B=\{y_1, \cdots, y_m\}\in M$.  If $m\leq n$, then $\varphi(B)=\alpha(y_1', \cdots, y_n')$ where $y_i'\in U_i$ and $B=\{y_1', \cdots, y_n'\}$ (repeating some elements if necessary, as $B\cap U_i\neq \emptyset$ for all $i$). Since $\alpha(y'_1, \cdots, y'_n)\in V$ there is nothing to show.
	Thus, we can assume that $n<m$ and  $y_i\in U_i$, $i=1,\cdots,n$.  We show the case $m=n+1$ which gives the general pattern. As $B\in M$, there is $k\leq n$ such that $y_{n+1}\in U_k$. Let $a=(y_1, \cdots, y_n)$ and $b=(y_1,\cdots, y_{k-1}, y_{n+1},\cdots, y_{n})$, i.e. we substitute $y_k$ by $y_{n+1}$  in $a$. We have
	\[
	y_1\cdots y_n y_{n+1}\,=\,[y_1\cdots y_n] [y_1\cdots y_{k-1}y_{n+1}\cdots y_{n}]=\alpha(a)\alpha(b).
	\]
	Thus $\varphi(B)=\alpha(a)\alpha(b)$.  Notice that $a,b\in U_1\times\cdots\times U_n$, thus $\alpha(a), \alpha(b)\in V$.
	Since $V$ is closed under multiplication, $\varphi(B)\in V$.
\end{proof}
%
%{\color{red}Creo que podemos eliminar el lema que sigue e incorporar ese argumento sencillo dentro de la prueba de \ref{ExelCorres}}

%\begin{lema} \label{inducida}
%Let $S$  be a topological inverse semigroup. Then the following assertions are equivalent.
%\begin{enumerate}
%		\item [(i)] For any topological space $X$ and any continuous map  $\eta: X\rightarrow E(S)$,  the map $\varphi: \finitosx \rightarrow E(S)$, $\{ x_1,\cdots,x_n\}\mapsto \eta(x_1)\cdots\eta(x_n)$ is continuous.
%		\item [(ii)]The map $\pi: \mathcal{P}_{\rm fin}(E(S))\rightarrow E(S)$, $\{ x_1,\cdots,x_n\}\mapsto x_1\cdots x_n$ is continuous.
%	\end{enumerate}
%\end{lema}

%\begin{proof}
%We only need to show $(ii)\Rightarrow (i)$. Since  $\eta$ is continuous,  then so is the map   but   thus  $\varphi$ is continuous.
%\end{proof}

We recall that $\widetilde{G}^R$ has the topology inherited as a topological inverse submonoid of $\widetilde{G}_c^R$ (see Proposition \ref{top-choi}). 
The topological version of Exel's correspondence is the following. 

\begin{teo} 
\label{ExelCorres}
Let $G$ be a topological group and $S$ be a topological inverse semigroup such that $E(S)$ has small-semilattices. 
\begin{itemize}
\item[(i)] If $\theta: G\to S$ is a continuous unital premorphism, then its associated semigroup  homomorphism $\widetilde\theta:\widetilde{G}^R\to S$ defined by  \eqref{what}  is  continuous. 

\item[(ii)] If $\rho:\widetilde{G}^R\to S$ is a continuous semigroup homomorphism, then its associated unital premorphism  $\widehat\rho:G\to S$ given by \eqref{asoc} is continuous. 
\end{itemize}

\end{teo}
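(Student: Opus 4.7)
\emph{Part (ii).} The plan is direct. Write $\iota\colon G\to\widetilde{G}^R$ for the map $g\mapsto(\{1,g\},g)$; then $\widehat\rho=\rho\circ\iota$, so by continuity of $\rho$ it suffices to check that $\iota$ is continuous into $K(G)\times G$. The second coordinate is the identity on $G$. For the first coordinate $g\mapsto\{1,g\}$, I would verify continuity on the Vietoris subbasis: for any open $V\su G$, the preimage of $V^+=\{K:K\su V\}$ equals $V$ when $1\in V$ and $\emptyset$ otherwise, while the preimage of $V^-=\{K:K\cap V\neq\emptyset\}$ equals $G$ when $1\in V$ and $V$ otherwise; all of these are open in $G$.

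\emph{Part (i).} The plan is to rewrite \eqref{what} so that $\widetilde\theta$ appears manifestly as a composition of continuous maps. Using $ss^{-1}s=s$ in $S$ together with $\theta(1)=1$ and the commutativity of idempotents, one checks that for every $(A,g)\in\widetilde{G}^R$,
\begin{equation*}
\widetilde\theta(A,g)\;=\;\left(\prod_{a\in A}\theta(a)\theta(a)^{-1}\right)\cdot\theta(g),
\end{equation*}
the product being unambiguous because the factors are commuting idempotents. Granting this reformulation, continuity of $\widetilde\theta$ reduces to continuity of the map $\mathcal{I}\colon\finitosg\to E(S)$, $A\mapsto\prod_{a\in A}\theta(a)\theta(a)^{-1}$, combined with the already continuous $(A,g)\mapsto\theta(g)$ (projection to $G$ followed by $\theta$) and with multiplication in $S$.

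To show continuity of $\mathcal{I}$ I would decompose it as
\[
\finitosg\ \xrightarrow{\;\varphi_*\;}\ \mathcal{P}_{\rm fin}(E(S))\ \xrightarrow{\;\pi\;}\ E(S),
\]
where $\varphi(h)=\theta(h)\theta(h)^{-1}$ is continuous by continuity of $\theta$ and of the semigroup operations in $S$; the induced hyperspace map $\varphi_*(A)=\varphi(A)$ is continuous for the Vietoris topologies because the preimage of a basic open $N(V_1,\ldots,V_n)$ is exactly $N(\varphi^{-1}(V_1),\ldots,\varphi^{-1}(V_n))$; and $\pi$ is the finite-product map. The composition $\pi\circ\varphi_*$ equals $\mathcal{I}$.

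The main obstacle — and the place where the hypothesis that $E(S)$ has small semilattices is genuinely used — is the continuity of $\pi$. This is precisely the content of Lemma \ref{smsl}: in a general topological semilattice the $n$-ary meet need not depend continuously on the \emph{unordered} tuple of inputs (only on the ordered tuple, via continuity of the binary operation), so without the small-semilattice hypothesis the reduction of $\mathcal{I}$ to $\pi\circ\varphi_*$ would not yield a continuous map and the argument would collapse.
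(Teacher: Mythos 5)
Your proposal is correct and follows essentially the same route as the paper: part (ii) is reduced to the continuity of $g\mapsto(\{1,g\},g)$, and part (i) factors $\widetilde\theta(A,g)=\mathcal{I}(A)\theta(g)$ with $\mathcal{I}=\pi\circ\varphi_*$ (the paper's $\varphi=\pi\circ I_\eta$), invoking Lemma \ref{smsl} for the continuity of $\pi$, which is exactly where the small-semilattice hypothesis enters in the paper as well. Your identification of the preimage of $N(V_1,\ldots,V_n)$ under the induced hyperspace map and your justification of the reformulation of \eqref{what} via $ss^{-1}s=s$ and commuting idempotents are both sound.
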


\begin{proof}
(i) Suppose $\theta$ is a continuous unital premorphism  as in the hypothesis. Let $\eta:G\to E(S)$ be given by $\eta(g)=\theta(g)\theta(g)^{-1}$ and $\varphi:\finitosg\to E(S)$ defined  by $\varphi(\{g_1, \cdots, g_n \}) = \eta(g_1)\cdots \eta(g_n)$. Then $\widetilde\theta(A,g)=\varphi(A)\theta(g)$. It is enough to show that $\varphi$ is continuous.  Notice that  $\varphi=\pi\circ I_{\eta},$  where  $\pi:\mathcal P_{\rm fin}(E(S)) \to E(S) $  is the map  defined in \eqref{pic}  and  $I_{\eta}: \mathcal{P}_{\rm fin}(G) \ni \{x_1,x_2,\cdots,x_n\}\mapsto \{\eta(x_1),\cdots,\eta(x_n)\}\in \mathcal{P}_{\rm fin}(E(S))$. By  Lemma \ref{smsl} ,  $\pi$ is continuous and clearly $I_\eta$ is also continuous, therefore so is $\varphi$, as desired.
	
	(ii) This is straightforward, as the map $g\mapsto \{1,g\}$ from $G$ to $\finitosg$ is continuous.
\end{proof}

\begin{rem}
{\rm By  \cite[Theorem 2.4]{KL},  $\widetilde\theta$ is the only continuous homomorphism from $\widetilde{G}^R$ to $S$  extending $\theta.$}
\end{rem}

\medskip

A {\em topological action of an inverse semigroup} $S$ on a topological space $X$ is a  homomorphism from $S$ to $\Gamma(X).$ 
By Theorem   \ref{minimality3}, we say that an action of $S$ on $X$ is {\em continuous} when it is a continuous homomorphism from $S$ to $(\Gamma(X), \tau_{hco})$. Thus, by Theorem \ref{ExelCorres}, we have the following .

\begin{coro}
\label{topol}
Let $X$ be a locally compact Hausdorff space. There is a one-to-one correspondence between continuous  partial action of $G$ on $X$ and continuous actions of $\widetilde{G}^R$ on $X.$
\end{coro}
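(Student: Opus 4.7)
The plan is to deduce this corollary directly from Theorem \ref{ExelCorres} by specializing the target inverse semigroup to $S=(\Gamma(X),\tau_{hco})$. The algebraic bijection between unital premorphisms $\theta\colon G\to \Gamma(X)$ and semigroup homomorphisms $\rho\colon \widetilde{G}^R\to \Gamma(X)$ is the classical Exel / Kellendonk--Lawson correspondence, given by the mutually inverse assignments $\theta\mapsto \widetilde\theta$ and $\rho\mapsto \widehat\rho$ defined in \eqref{what} and \eqref{asoc}. Since this bijection is purely algebraic, the only thing left to verify is that continuity is preserved in both directions.

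First I would invoke Example \ref{ex-non}(i), which tells us that $E(\Gamma(X),\tau_{hco})$ is a topological semilattice with small semilattices when $X$ is locally compact Hausdorff. This is precisely the hypothesis required by Theorem \ref{ExelCorres}. Then part (i) of that theorem, applied to any continuous unital premorphism $\theta\colon G\to (\Gamma(X),\tau_{hco})$, gives that $\widetilde\theta\colon \widetilde{G}^R\to \Gamma(X)$ is a continuous semigroup homomorphism extending $\theta$; equivalently, $\widetilde\theta$ is a continuous action of $\widetilde{G}^R$ on $X$ in the sense declared just before the corollary. Conversely, given a continuous semigroup homomorphism $\rho\colon \widetilde{G}^R\to (\Gamma(X),\tau_{hco})$, part (ii) of Theorem \ref{ExelCorres} gives that $\widehat\rho\colon G\to \Gamma(X)$ is a continuous unital premorphism, i.e.\ a continuous partial action of $G$ on $X$.

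The fact that $\theta\mapsto\widetilde\theta$ and $\rho\mapsto\widehat\rho$ are mutually inverse is built into the algebraic correspondence and needs no topological input, so there is no genuine obstacle here; the corollary is essentially a repackaging of Theorem \ref{ExelCorres} for $S=\Gamma(X)$. All the real work has already been done in verifying the small-semilattice property of $E(\Gamma(X),\tau_{hco})$ and in Lemma \ref{smsl}.
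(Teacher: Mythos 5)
Your proposal is correct and follows exactly the route the paper takes: the corollary is stated there as an immediate consequence of Theorem \ref{ExelCorres} applied to $S=(\Gamma(X),\tau_{hco})$, whose small-semilattice hypothesis is supplied by Example \ref{ex-non}(i), with the two directions of continuity given by parts (i) and (ii) of that theorem and the underlying bijection being the algebraic Exel/Kellendonk--Lawson correspondence. Your write-up simply makes explicit what the paper leaves as ``Thus, by Theorem \ref{ExelCorres}, we have the following.''
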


The category of  continuous partial actions  of a topological group $G$  on locally compact Hausdorff spaces together with continuous morphisms forms a subcategory of the category of partial actions of  $G$ and morphisms (see \cite[p. 17]{abadie}).  The proof of the following is now immediate from Corollary \ref{topol} and a standard 
category theory.

\begin{prop}
The function $\theta \mapsto \tilde\theta$ is the object part of a functor from the
category of  continuous partial actions  of a topological group $G$  on locally compact Hausdorff spaces to the category
of continuous actions of $\widetilde{G}^R$  on locally compact Hausdorff spaces and this functor determines an isomorphism between these categories.
\end{prop}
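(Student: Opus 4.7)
The plan is to spell out morphisms in both categories and verify that Corollary \ref{topol} upgrades to a fully faithful (and essentially surjective) functor. A morphism between continuous partial actions $(X,\theta) \to (Y, \psi)$ is a continuous map $f: X \to Y$ such that $f(\dom \theta(g)) \su \dom \psi(g)$ and $f\circ \theta(g) = \psi(g)\circ f$ on $\dom \theta(g)$ for every $g \in G$. A morphism of continuous $\widetilde{G}^R$-actions is defined analogously, replacing $G$ by $\widetilde{G}^R$ and $\theta,\psi$ by $\tilde\theta,\tilde\psi$. The candidate functor $\mathcal{F}$ is $\theta\mapsto\tilde\theta$ on objects and the identity on the underlying continuous map $f$; its candidate inverse $\mathcal{G}$ is $\rho\mapsto\hat\rho$ on objects (with $\hat\rho(g)=\rho(\{1,g\},g)$) and again the identity on morphisms. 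By Corollary \ref{topol}, $\mathcal{F}$ and $\mathcal{G}$ are mutually inverse on objects, so the work is to check they are well defined on morphisms.

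The nontrivial direction is to verify that if $f$ is a morphism of partial actions, then it intertwines $\tilde\theta(A,g)$ with $\tilde\psi(A,g)$ for every $(A,g) \in \widetilde{G}^R$. Using formula \eqref{what},
\[
\tilde\theta(\{1,g_1,\dots,g_n\},g_n) = \theta(g_1)\theta(g_1)^{-1}\cdots\theta(g_{n-1})\theta(g_{n-1})^{-1}\theta(g_n),
\]
which is a composition of the idempotents $\theta(g_i)\theta(g_i)^{-1}=\mathrm{id}_{\im \theta(g_i)}$ followed by $\theta(g_n)$. Since $f$ is $G$-equivariant, $f(\im \theta(g_i))\su \im \psi(g_i)$, so $f$ intertwines each idempotent $\theta(g_i)\theta(g_i)^{-1}$ with $\psi(g_i)\psi(g_i)^{-1}$. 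Composing these intertwinings (together with the intertwining of $\theta(g_n)$ and $\psi(g_n)$) and tracking the domains through the formula yields the required equivariance for $\tilde\theta(A,g)$. The opposite direction is immediate: specializing to the elements $(\{1,g\},g)\in\widetilde{G}^R$ recovers the $G$-equivariance of $f$, since $\tilde\theta(\{1,g\},g)=\theta(g)$ and likewise for $\psi$.

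Functoriality of both $\mathcal{F}$ and $\mathcal{G}$ is automatic: each acts as the identity on the underlying continuous maps, so identities and compositions are preserved trivially. Combining this with the object-level bijection from Corollary \ref{topol} shows that $\mathcal{F}$ and $\mathcal{G}$ are mutually inverse functors, which is precisely the claimed isomorphism of categories. The only point requiring care is the domain book-keeping in the second paragraph: one must unwind that a $G$-equivariant morphism of partial actions necessarily respects all the intermediate images $\im \theta(g_i)$ appearing in \eqref{what}; once this is noted the argument is routine, and I expect no deeper obstacle.
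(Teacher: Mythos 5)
Your proposal is correct and follows essentially the same route as the paper, which simply declares the result ``immediate from Corollary \ref{topol} and standard category theory''; you are filling in exactly the details the paper leaves to the reader, namely the object-level bijection from Corollary \ref{topol} plus the check that a $G$-equivariant map automatically intertwines the idempotents $\theta(g_i)\theta(g_i)^{-1}$ appearing in \eqref{what}. The domain book-keeping you flag does go through, since $f(\im\,\theta(g))\su\im\,\psi(g)$ follows from equivariance and intertwinings compose correctly for partial maps.
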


We show next a result  analogous to Theorem \ref{ExelCorres} but now for  $T=\widetilde{G}_c^R$ and $S\su\Gamma(X)$. 

\begin{teo} 
\label{ChoisExtension}
Let $G$ be a topological group and $X$ a locally compact Hausdorff space and $S\su \Gamma(X)$ be a $\tau_{hco}$-closed inverse subsemigroup. Suppose  $\theta: G\to (S, \tau_{hco})$ is a continuous unital premorphism. Then $\theta$ can be extended to a continuous semigroup  homomorphism $\theta^*:\widetilde{G}_c^R\to (S,\tau_{hco})$. 
\end{teo}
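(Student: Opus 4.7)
The plan is to reduce the problem to the purely algebraic extension machinery of Theorem \ref{Text}, then upgrade it to a continuous extension by feeding the relevant meet into Proposition \ref{ext-to-hyp}. First I would identify the support of $\widetilde{G}_c^R$: an element $(A,g)\in K(G)\times G$ lies in $\widetilde{G}_c^R$ precisely when $\{1,g\}\su A$, and inverses are available in $\widetilde{G}_c^R$ (Proposition \ref{top-choi}), so ${\bf Supp}(\widetilde{G}_c^R)=\{A\in K(G):\, 1\in A\}$, which is a subsemilattice of $\mathcal{P}^*(G)$. Since $S$ is $\tau_{hco}$-closed in $\Gamma(X)$ and $\theta$ is continuous, the third item of Example \ref{ex-meetcomplete} tells us that $E(S)$ is $(K(G),\theta)$-meet complete, hence a fortiori $({\bf Supp}(\widetilde{G}_c^R),\theta)$-meet complete.

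Theorem \ref{Text} then produces a meet-preserving semigroup homomorphism $\theta^*:\widetilde{G}_c^R\to S$ extending $\theta$ via the formula
\[
\theta^*(A,g)=\mathcal{I}_A\,\theta(g),\qquad \mathcal{I}_A=\bigwedge_{a\in A}\theta(a)\theta(a)^{-1}.
\]
To establish continuity, I would factor $\theta^*$ as $(A,g)\mapsto (\mathcal{I}_A,\theta(g))\mapsto \mathcal{I}_A\theta(g)$. The final multiplication is continuous because $S$ is a topological inverse semigroup; the map $g\mapsto\theta(g)$ is continuous by hypothesis; and $\widetilde{G}_c^R$ carries the subspace topology from $K(G)\times G$. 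The remaining piece is continuity of $A\mapsto\mathcal{I}_A$ from $(K(G),\tau_V)$ into $(E(S),\tau_{hco})$, which I would obtain by applying Proposition \ref{ext-to-hyp} to $Z=G$ and $\eta:G\to E(S)$ given by $\eta(g)=\theta(g)\theta(g)^{-1}$ (continuous because inversion, multiplication and $\theta$ are continuous).

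The key bookkeeping step is to verify that the abstract meet $\mathcal{I}_A$ of Theorem \ref{Text} agrees with the concrete $\mathrm{id}_{\psi(A)}$ produced by Proposition \ref{ext-to-hyp}, where $\psi(A)=\bigcap_{g\in A}\dom(\theta(g))$. In $\Gamma(X)$ one has $\theta(g)\theta(g)^{-1}=\mathrm{id}_{\dom\theta(g)}$, so the greatest lower bound of the family $\{\mathrm{id}_{\dom\theta(g)}:g\in A\}$ computed in $E(\Gamma(X))$ is exactly $\mathrm{id}_{\psi(A)}$. Proposition \ref{ext-to-hyp}(i) guarantees this idempotent already lies in $\overline{S}=S$, so it is the meet computed in $E(S)$ as well; thus the two descriptions coincide, and Proposition \ref{ext-to-hyp}(ii) delivers the continuity of $A\mapsto\mathcal{I}_A$.

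I do not expect any serious obstacle beyond this identification: once the meet formula is recognized as the hyperspace map of Proposition \ref{ext-to-hyp}, continuity of $\theta^*$ is a routine composition-of-continuous-maps argument. The only subtlety worth flagging is that the closedness of $S$ is used twice --- once in Example \ref{ex-meetcomplete}(iii) to ensure the meets exist in $S$, and once to identify $E(\overline{S})=E(S)$ so that the map from Proposition \ref{ext-to-hyp} lands in the right target.
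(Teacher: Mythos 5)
Your proposal is correct and follows essentially the same route as the paper: verify $({\bf Supp}(\widetilde{G}_c^R),\theta)$-meet completeness via Example \ref{ex-meetcomplete}(iii), invoke Theorem \ref{Text} for the algebraic extension $\theta^*(A,g)=\mathcal{I}_A\theta(g)$, and get continuity of $A\mapsto\mathcal{I}_A$ from Proposition \ref{ext-to-hyp}; your explicit identification of the abstract meet with $\mathrm{id}_{\psi(A)}$ is a detail the paper leaves implicit. One minor slip: $\theta(g)\theta(g)^{-1}=\mathrm{id}_{\im\theta(g)}$ rather than $\mathrm{id}_{\dom\theta(g)}$, so $\psi(A)=\bigcap_{g\in A}\dom\bigl(\theta(g)\theta(g)^{-1}\bigr)=\bigcap_{g\in A}\im(\theta(g))$; this does not affect the argument.
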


\begin{proof}
 First we verify that we can applied Theorem \ref{Text}. Clearly, ${\bf Supp}(\widetilde{G}_c^R)=K(G)\setminus\{\emptyset\}$ and thus  $E(S)$  is  $({\bf Supp}(\widetilde{G}_c^R), \theta)$-meet complete (see Example \ref{ex-meetcomplete}). Hence $\theta^*(A,g)=\mathcal{I}_A\theta(g)$ is an extension as desired. We only need to check that it is continuous. Indeed,  it follows from 
Lemma \ref{ext-to-hyp} that $\mathcal{I}:K(G)\to (E(S),\tau_{hco})$ is continuous. 
\end{proof}

\begin{ex}
{\rm  Let $X$ be locally compact Hausdorff  space  and $G$ be a topological group. Let  $a:G\times X\to X$ be a continuous action and  $Y\subseteq X$ be an open set. By Proposition \ref{inducon}, the induced partial action  $\theta:G\to  (\Gamma(Y), \tau_{hco})$ is continuous, thus by  Theorem \ref{ChoisExtension} the map  $\theta$ has a  continuous  extension $\theta^*: \widetilde{G}_c^R\to \Gamma(Y).$}
\end{ex}

Our next goal is to show that  the map $\theta^*$ in Theorem \ref{ChoisExtension}  is not unique.  Let $\lambda_g$ the left multiplication by $g\in G$ it  follows  from \cite[(2) Proposition 2.2]{Choi2013}  the map 
$$
\tilde\Lambda: \tilde{G}^R_c\ni (A,g)\mapsto \lambda_g|_{\lambda_{g^{-1}}(A^c)} \in \Gamma(G\setminus\{1\})
$$
%\begin{equation*} \label{La}
%\tilde\Lambda(A,g)=\begin{cases}
%	\lambda_g|_{\lambda_{g^{-1}}(A^c)},\ \text{if}\ A\neq \{1\}\\
%	{\rm id}_G,\ \text{if}\ A=\{1\}
%\end{cases}
%\end{equation*} 
%\begin{equation}\label{La}
%	\Lambda: \tilde{G}^R_c \ni (A,g)\mapsto \lambda_g|_{\lambda_{g^{-1}}(A^c)}\in \Gamma(G)
%\end{equation}
is a  monoid homomorphism which turns out to be a monomorphism provided that $G$ is not compact.

 In the following Lemma we study the continuity of $\tilde\Lambda$ when $\Gamma(G)$ has the topology $\tau_{hco}$.
\begin{prop}\label{choicon}
	 Let $G$ be locally compact Hausdorff group. Then $\tilde\Lambda$  is continuous. 
\end{prop}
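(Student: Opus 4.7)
The plan is to identify $\tilde\Lambda$ with the extension produced by Theorem \ref{ChoisExtension} applied to the unital premorphism $\mathfrak{l}$ of Example \ref{exten}. Recall $\mathfrak{l}\colon G\to \Gamma(G\setminus\{1\})$ is given by $\mathfrak{l}(g)=\lambda_g|_{\{1,g^{-1}\}^c}$. Using that $\{1,g\}\subseteq A$ forces $\{1,g^{-1}\}\subseteq g^{-1}A$, a short calculation shows that the composition ${\rm id}_{A^c}\circ\mathfrak{l}(g)$ has domain $(g^{-1}A)^{c}=\lambda_{g^{-1}}(A^{c})$ and acts as $\lambda_g|_{\lambda_{g^{-1}}(A^c)}$. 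Hence $\tilde\Lambda(A,g)=\mathcal{I}_A\,\mathfrak{l}(g)=\mathfrak{l}^{*}(A,g)$ on $\widetilde{G}_c^R$, via the formula \eqref{test}.

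By this reduction, it suffices to verify the hypotheses of Theorem \ref{ChoisExtension}; namely that $\mathfrak{l}$ is continuous with respect to $\tau_{hco}$, the target $S=\Gamma(G\setminus\{1\})$ being trivially $\tau_{hco}$-closed in itself and $G\setminus\{1\}$ being a locally compact Hausdorff space as an open subset of one. For this I would realize $\mathfrak{l}$ as the partial action induced from the global left multiplication action $a\colon G\times G\to G$, $a(g,x)=gx$, restricted to the open subset $Y=G\setminus\{1\}$. A direct inspection shows $Y_g=G\setminus\{1,g\}$, so $\tilde{a}_g=\lambda_g|_{G\setminus\{1,g^{-1}\}}=\mathfrak{l}(g)$. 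Since $a$ is continuous, Proposition \ref{inducon} yields continuity of $\tilde{a}=\mathfrak{l}$ with respect to $\tau_{hco}$.

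Applying Theorem \ref{ChoisExtension} then gives that $\mathfrak{l}^{*}\colon \widetilde{G}_c^R\to(\Gamma(G\setminus\{1\}),\tau_{hco})$ is continuous, and since $\tilde\Lambda=\mathfrak{l}^{*}$, the conclusion follows. The only genuine work in this approach is the bookkeeping identifying $\tilde\Lambda$ simultaneously with $\mathfrak{l}^{*}$ and with an induced partial action; beyond that, the heavy lifting is absorbed into Theorem \ref{ChoisExtension} and Proposition \ref{inducon}, so I do not expect any real obstacle.
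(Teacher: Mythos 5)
Your argument is correct, but it takes a genuinely different route from the paper's. The paper proves Proposition \ref{choicon} by hand: for each type of subbasic $\tau_{hco}$-open set ($\langle K,V\rangle$, ${\sf D}^{-1}(V^-)$, ${\sf I}^{-1}(V^-)$) it uses the compactness of $A$ and $K$ and the continuity of multiplication in $G$ to produce an explicit basic neighbourhood of $(A,g)$ in $K(G)\times G$ whose image lies inside the given set. You instead observe that $\tilde\Lambda(A,g)=\mathcal{I}_A\,\mathfrak{l}(g)$, i.e.\ that $\tilde\Lambda$ is the canonical extension \eqref{test} of the premorphism $\mathfrak{l}$ of Example \ref{exten}, and that $\mathfrak{l}$ is the partial action induced by left translation on the open set $G\setminus\{1\}$, so Proposition \ref{inducon} and Theorem \ref{ChoisExtension} do all the work. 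Both identifications check out: $\mathcal{I}_A=\bigwedge_{a\in A}{\rm id}_{\{1,a\}^c}={\rm id}_{A^c}$ for $A$ compact with $1\in A$, and $\dom\left({\rm id}_{A^c}\circ\lambda_g|_{\{1,g^{-1}\}^c}\right)=\lambda_{g^{-1}}(A^c)$ because $A^c\subseteq\{1,g\}^c$. Your route is shorter and more conceptual; what it loses is the self-contained direct estimate, which is what allows the paper to treat $\tilde\Lambda$ as a continuous extension obtained \emph{independently} of Theorem \ref{ChoisExtension}.

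One warning. The identity $\tilde\Lambda=\mathfrak{l}^*$ on $\widetilde{G}_c^R$, on which your reduction rests, is exactly what Example \ref{fex} denies: the paper asserts $\tilde\Lambda([0,1],0)={\rm id}_{[0,1]^c}$ but $\mathfrak{l}^*([0,1],0)={\rm id}_{\{0,1\}^c}$, and uses this pair as its witness that the extension in Theorem \ref{ChoisExtension} is not unique. With the definitions as printed, your computation is the correct one: ${\rm id}_{\{0,1\}^c}$ is not even a lower bound of $\{{\rm id}_{\{0,a\}^c}:a\in[0,1]\}$, so it cannot equal $\mathcal{I}_{[0,1]}$, and indeed $\mathcal{I}_{[0,1]}\,\mathfrak{l}(0)={\rm id}_{[0,1]^c}=\tilde\Lambda([0,1],0)$ (this is also forced by Theorem \ref{Text}, which makes every extension $\leq\theta^*$). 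So your proof is valid, but it has the side effect of showing that Example \ref{fex}, as stated, is erroneous and no longer exhibits non-uniqueness; be aware of this clash, since if the authors intended a different $\tilde\Lambda$ or $\mathfrak{l}^*$ your identification step would have to be redone.
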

\begin{proof} Let  $(A,g)\in \tilde{G}^R_c$ be such  $\tilde\Lambda(A,g)\in \langle K,V\rangle$. Then  $K\subseteq g^{-1}\cdot A^c$ and $gK\subseteq V$. Since $K$ is compact,  there is a neighborhood $U$ of  $g$ in $G$ such that $UK\subseteq V$. Moreover,  $g^{-1}A\subseteq K^c$ and by the compactness of  $A$,  there are open sets  $W$ and   $Z$ of $G$ for which  $g^{-1}\in W$, $A\subseteq Z$ and  $WZ\subseteq K^c$. It is easy to see that  $g\in L:=W^{-1}\cap U$ and  $(A,g)\in J:=(N(Z)\times L)\cap \tilde{G}^R_c$.  Moreover,  for  $(B,h)\in J$, we have $h^{-1}B\subseteq WB\subseteq WZ\subseteq K^c$, and $K\subseteq \dom(\tilde\Lambda(B,h))$. Hence   $hK\subseteq UK\subseteq V$ and  $\tilde\Lambda(J)\subseteq \langle K,V\rangle$.
	
 On the other hand, take $V$ a non-empty open set  $G$. We show that  $\tilde\Lambda^{-1}({\sf D}^{-1}(V^-))$ is open. Note that  $(A,g)\in \Lambda^{-1}({\sf D}^{-1}(V^-))$ if and only if   $g^{-1}\cdot A\cap V\neq \emptyset$. In this sense if  $(A,g)\in \tilde\Lambda^{-1}({\sf D}^{-1}(V^-))$, there exists $a\in A$ such that  $g^{-1}\cdot a\in V$, and  open sets $W_0$ y $Z_0$ of  $G$ with  $g^{-1}\in W_0$, $a\in Z_0$ and $W_0\cdot Z_0\subseteq V$. We have  $(A,g)\in Y:=(N(G,Z_0)\times W_0^{-1})\cap \tilde{G}^R_c$. Also, for  $(B,h)\in Y$, there is  $b\in B\cap Z_0$ and $h^{-1}\cdot b\in W_0\cdot Z_0\subseteq V$,  then  $h^{-1}\cdot B\cap V\neq \emptyset$ which gives $(B,h)\in \Lambda^{-1}({\sf D}^{-1}(V^-))$.  This shows  $Y\subseteq \Lambda^{-1}({\sf D}^{-1}(V^-))$ and  the set $\tilde\Lambda^{-1}({\sf D}^{-1}(V^-))$ is open. Finally, as  ${\sf I}^{-1}(V^-)=i^{-1}({\sf D}^{-1}(V^-))$, we have  $\tilde\Lambda^{-1}({\sf I}^{-1}(V^-))$ is open in $\tilde{G}^R_c$.
\end{proof}
\begin{ex}\label{fex}{\rm
 Suppose  $G$ is  a  locally compact Hausdorff  group, by Proposition \ref{choicon}  the map $\Lambda$ defined in \eqref{lambdag}  
is a  continuous premorphism  which is  clearly extended by the  continuous homomorphism $\tilde\Lambda,$ also by Theorem \ref{ChoisExtension} the map $\mathfrak{l}^*$ defined in Example \ref{exten} is another continuous homomorphism extending $\Lambda,$ in general $\tilde\Lambda\neq \mathfrak{l}^*.$ For a concrete example take $G=\mathbb{R}.$  Then  $\tilde\Lambda ([0,1],0)={\rm id}_{[0,1]^c}$ and $\mathfrak{l}^*([0,1],0)={\rm id}_{\{0,1\}^c}.$}
\end{ex}

\bigskip

{\bf Acknowledgements:} We are very thankful to the referee for all their  comments that improved the presentation of the paper.  This work was partially supported by grants No. 4.583 of Fundaci\'on para la Promoci\'on de la Investigaci\'on y la Tecnolog\'ia del Banco de La Rep\'ublica (Colombia) and project VIE-8041 of Universidad Industrial de Santander.


\begin{thebibliography}{100}
	
\bibliographystyle{plain}

\bibitem{abadie}  F. Abadie.  Enveloping actions and Takai duality for partial actions. {\it Journal of Func. Anal.} (2003) 197: 14-67.

\bibitem{AbdBrown}
A. Abd-Allah and R. Brown.
\newblock A compact-open topology on partial maps with open domain.
\newblock {\em J. London Math. Soc.} (2), 21(3):480--486, 1980.

\bibitem{AM} 
J. \'Alvarez  L\'opez and M.  Moreira Galicia.
\newblock Topological Molino's Theory.
\newblock {\em Pacific. J. Math.,}  280:257–314, 2016.

\bibitem{arens46}
R. Arens.
\newblock Topologies for homeomorphism groups.
\newblock {\em Amer. J. Math.}, 68:593--610, 1946.

\bibitem{Beer1993}
G.~Beer.
\newblock {Topologies on closed and closed convex sets}.
\newblock {\em Kluwer Academic Publisher, Dordercht}, 1993.

\bibitem{BFP} D. Bagio, D. Fl\^ores, A. Paques. Partial actions of ordered groupoids on rings, {\it J. Algebra Appl. } 9(3) (2010) 501–517.

\bibitem{BR} J.C. Birget and J. Rhodes.
\newblock Almost finite expansions of arbitrary semigroups
\newblock {\em  J. Pure Appl. Algebra,} 32 (1984), pp. 239-287


\bibitem{BE} A. Buss, R. Exel. Inverse semigroup expansions and their actions on $C^*$-algebras, {\it Illinois J. Math.} 56 (4) 1185 - 1212, Winter 2012. 

\bibitem{Choi2013}
K.  Choi.
\newblock Birget-{R}hodes expansions of topological groups.
\newblock {\em Adv. Stud. Contemp. Math. (Kyungshang)}, 23(1):203--211, 2013.

\bibitem{DiConcilioetal2000}
A. Di~Concilio  and S. Naimpally.
\newblock Proximal set-open topologies on partial maps.
\newblock {\em Acta Math. Hungar.}, 88(3):227--237, 2000.

\bibitem{dijkstra}
J. Dijkstra.
\newblock On homeomorphism groups and the compact-open topology.
\newblock {\em Amer. Math. Monthly}, 112(10):910--912, 2005.

\bibitem{dijkstra2010}
J. Dijkstra and R. Tahri.
\newblock Homeomorphism groups and the topologist's sine curve.
\newblock {\em Bull. Pol. Acad. Sci. Math.}, 58(3):269--272, 2010.


\bibitem{DO} M. Dokuchaev. Recent developments around partial actions, {\it São Paulo J. Math. Sci.} (2019) \textbf{13} (1) 195-247.

\bibitem{elliott2020}
L. Elliott, J. Jonušas,  Z. Mesyan, J. Mitchell, M. Morayne and Y. Péresse.
\newblock Automatic continuity, unique {P}olish topologies, and {Z}ariski
  topologies (Part I, monoids).
\newblock {arxiv.org/abs/1912.07029v3}
\newblock 2020.

 \bibitem{E0} R.\ Exel.  Circle actions on $C^*$-algebras, partial automorphisms and generalized
Pimsner-Voiculescu exact sequences, {\it J. Funct. Anal.,} 122, (3), (1994), 361–401.

\bibitem{E1} R.\ Exel.  Partial actions of groups and actions of inverse
semigroups. {\it Proc.\ Am.\ Math.\ Soc.} {\bf 126}(12), 3481--3494 (1998).


\bibitem{E2} R. Exel. Partial Dynamical Systems, Fell Bundles and Applications. Mathematical Surveys and Monographs, vol. 224. American Mathematical Society
 Providence, RI (2017).

\bibitem{CCL}
Gerhard Gierz, Karl~Heinrich Hofmann, Klaus Keimel, Jimmie~D. Lawson, Michael~W. Mislove, and Dana~S. Scott.
\newblock {\em A compendium of continuous lattices}.
\newblock Springer-Verlag, Berlin-New York, 1980.

 \bibitem{GH1} V. Gould and C. Hollings. Partial actions of inverse and weakly left $E$-ample semigroups, {\it J. Aust. Math.  Soc.} 86 (2009), no. 3, 355–377.


 \bibitem{GH}  V. Gould and C. Hollings. Actions and partial actions of inductive constellations, {\it Semigroup Forum} 82 (2011), no. 1, 35–60.

\bibitem{KL} J. Kellendonk and M. V. Lawson. Partial actions of groups.  \emph{Internat. J. Algebra Comput.}  (2004) 14: 87--114.

\bibitem{Law1} J. D. Lawson . Topological Semilattices with Small Semilattices
\emph{J. Lond. Math.}  (2)  719--724 (1969).

\bibitem{La} J.  D. Lawson. Lattices  with no interval homomorphisms.  \emph{Pacific J. Math}  (2) 32: 459--465 (1970).

\bibitem{Law} M. V. Lawson. “Inverse Semigroups, The Theory of Partial Symmetries,” Word Scientific, Singapore, 1998.


% \bibitem{MM} M.M. McWaters, 
%\newblock A note on Topological Semilattices, 
%\newblock{\em J. London Math. Soc.} s2-1 (1) (1969) 64–66.

\bibitem{PerezUzca2020}
J. P\'erez and C. Uzc\'ategui.
\newblock Topologies on the symmetric inverse semigroup.  {\em Semigroup Forum}, to appear. 
\newblock {arXiv2012.03041}
\newblock 2020.


 \bibitem{SZ} M.B. Szendrei. 
\newblock A note on Birget–Rhodes expansion of groups, 
\newblock{\em J. Pure Appl. Algebra} 58 (1989) 93–99.


\end{thebibliography}
\end{document}